\gdef\n@te#1#2{\leavevmode\vadjust{%
 {\setbox\z@\hbox to\z@{\strut#1}%
  \setbox\z@\hbox{\raise\dp\strutbox\box\z@}\ht\z@=\z@\dp\z@=\z@%
  #2\box\z@}}}
\gdef\leftnote#1{\n@te{\hss#1\quad}{}}
\gdef\rightnote#1{\n@te{\quad\kern-\leftskip#1\hss}{\moveright\hsize}}
\gdef\?{\FN@\qumark}
\gdef\qumark{\ifx\next"\DN@"##1"{\leftnote{\rm##1}}\else
 \DN@{\leftnote{\rm??}}\fi{\rm??}\next@}}
\DeclareFontFamily{OT1}{wncyr}{\hyphenchar\font45 }
\DeclareFontShape{OT1}{wncyr}{m}{n}{%
   <5> <6> <7> <8> <9> gen * wncyr
   <10> <10.95> <12> <14.4> <17.28> <20.74>  <24.88>wncyr10}{}
\DeclareFontShape{OT1}{wncyr}{m}{it}{%
   <5> <6> <7> <8> <9> gen * wncyi
   <10> <10.95> <12> <14.4> <17.28> <20.74> <24.88> wncyi10}{}
\DeclareFontShape{OT1}{wncyr}{m}{sc}{%
   <5> <6> <7> <8> <9> <10> <10.95> <12> <14.4>
   <17.28> <20.74> <24.88>wncysc10}{}
\DeclareFontShape{OT1}{wncyr}{b}{n}{%
   <5> <6> <7> <8> <9> gen * wncyb
   <10> <10.95> <12> <14.4> <17.28> <20.74> <24.88>wncyb10}{}
\def\rus{\usefont{OT1}{wncyr}{m}{n}\cyracc\fontsize{9}{11pt}\selectfont}
\def\rusit{\usefont{OT1}{wncyr}{m}{it}\cyracc\fontsize{9}{11pt}\selectfont}
\theoremstyle{plain}
\newtheorem{theorem}{Theorem}
\newtheorem*{thmnonumber}{Theorem}
\newtheorem{proposition}{Proposition}
\newtheorem{lemma}{Lemma}
\newtheorem{claim}{Claim}
\newtheorem{corollary}{Corollary}
\theoremstyle{definition}
\newtheorem{remark}{\it Remark}
\newtheorem{nothing*}[theorem]{}
\newtheorem{subnothing*}[sub]{}
\newtheorem{example}{Example}
\newtheorem{question}{Question}
\theoremstyle{remark}
\newcommand{\lb}{{\rm(\hskip -.1mm}}
\newcommand{\rb}{{\hskip .3mm\rm)}}
\def\AX {{\rm Aut}(X)}
\def\ft {\varphi_t}
\def\fat {\{\ft\}_{t\in T}}
\def\fT{\varphi^{\ }_T}
\def\fs {\psi_s}
\def\fas {\{\fs\}_{s\in S}}
\def\fS{\psi^{\ }_S}
\def\XY{X\times^{\ }_{Z}X}
\begin{document}

\title[On infinite dimensional algebraic transformation groups
]{On infinite dimensional\\
algebraic transformation groups}

\author[Vladimir  L. Popov]{Vladimir  L. Popov${}^*$}
\address{Steklov Mathematical Institute,
Russian Academy of Sciences, Gubkina 8, Moscow 119991,
Russia}
\address{National Research University Higher School of Economics, Myasnitskaya
20, Moscow 101000, Russia} \email{popovvl\char`\@mi.ras.ru}

\dedicatory{To E.\;B.\;Dynkin on his $90$th anniversary}

\thanks{
 ${}^*$\,Supported by
 grants {\rus RFFI
11-01-00185-a}, {\rus N{SH}--5139.2012.1}, and the
program {\it Contemporary Problems of Theoretical
Mathematics} of the Russian Academy of Sciences, Branch
of Mathematics.}

\maketitle

\begin{abstract} We explore orbits, rational invariant functions, and quotients
of the natural actions of connected, not necessarily finite dimensional subgroups of the automorphism groups
of irreducible algebraic varieties.\;The applications of the results obtained are given.
\end{abstract}


\subsection*{1. Introduction} The
following
well-known result (see, e.g.,\,{\cite[Prop. I.2.2]{Bor91}})
is one of the
indispensable tools
in the theory of algebraic groups:

\begin{thmnonumber}
Let $\varphi_i\colon T_i\to G$ $(i\in \mathcal I)$ be a collection of morphisms from irreducible algebraic varieties $T_i$ into an algebraic group $\,G$, and assume that the identity element of $\,G$ lies in $X_i:=\varphi_i(T_i)$ for each $i\in \mathcal I$.\;Then the subgroup $A$ of $\,G$ generated, as an abstract group, by the set $M:=\bigcup_{i\in \mathcal I} X_i$  coincides with the intersection of all closed subgroups of $\,G$ containing $M$.\;Moreover, $A$ is connected and there is a finite sequence
$(\alpha_1,\ldots, \alpha_n)$ in $\mathcal I$ such that $A=X^{e_1}_{\alpha_1}\cdots X^{e_n}_{\alpha_n}$, where $e_i\!=\!\pm 1$ for each\;$i$.
\end{thmnonumber}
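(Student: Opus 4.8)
The plan is to realize $A$ as a single ``word'' in the sets $X_i^{\pm1}$ of maximal dimension and to show that this word is already a closed connected subgroup. For a finite sequence $\mathbf a=(\alpha_1,\dots,\alpha_m)$ in $\mathcal I$ together with signs $(e_1,\dots,e_m)$, $e_j=\pm1$, put $Y_{\mathbf a}:=X_{\alpha_1}^{e_1}\cdots X_{\alpha_m}^{e_m}$. Each $Y_{\mathbf a}$ is the image of the irreducible variety $T_{\alpha_1}\times\cdots\times T_{\alpha_m}$ under the morphism obtained by composing $\prod_j\varphi_{\alpha_j}$ (post-composed with inversion in the factors where $e_j=-1$) with iterated multiplication in $G$; hence, by Chevalley's theorem, $Y_{\mathbf a}$ is a constructible irreducible subset of $G$ whose closure $\overline{Y_{\mathbf a}}$ is irreducible and contains a dense open subset lying in $Y_{\mathbf a}$. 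Since $A$ is exactly the set of all finite products of elements of $M\cup M^{-1}$, we have $A=\bigcup_{\mathbf a}Y_{\mathbf a}$. As $\dim\overline{Y_{\mathbf a}}\le\dim G$ for every $\mathbf a$, I may fix $\mathbf a$ for which $\dim\overline{Y_{\mathbf a}}$ is maximal and set $Y:=\overline{Y_{\mathbf a}}$.

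The first key step is a stability property coming from $e\in X_i$: prepending or appending any factors can only enlarge $Y_{\mathbf a}$, so $Y_{\mathbf a}\subseteq Y_{\mathbf c\mathbf a}$ and $Y_{\mathbf a}\subseteq Y_{\mathbf a\mathbf c}$ for every word $\mathbf c$; maximality of $\dim Y$ together with irreducibility then forces $\overline{Y_{\mathbf c\mathbf a}}=\overline{Y_{\mathbf a\mathbf c}}=Y$. From this I would deduce that $Y$ is a subgroup. Applying the inversion homeomorphism to $Y_{\mathbf a}$ gives another word, whence $Y^{-1}=\overline{Y_{\mathbf a}^{-1}}=Y$. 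For multiplicative closure, note $Y_{\mathbf a}Y_{\mathbf a}=Y_{(\mathbf a,\mathbf a)}$ has closure $Y$, so $Y_{\mathbf a}Y_{\mathbf a}\subseteq Y$; since the set $\{x\in G:xY\subseteq Y\}=\bigcap_{y\in Y}Yy^{-1}$ is closed and contains the dense subset $Y_{\mathbf a}$, it contains $Y$, giving $YY\subseteq Y$. Finally $e\in X_{\alpha_1}\subseteq Y_{\mathbf a}\subseteq Y$, and likewise $X_i=e\cdot X_i\subseteq Y_{\mathbf a}X_i\subseteq Y$ shows $M\subseteq Y$; thus $Y$ is a closed subgroup containing $M$, hence $A\subseteq Y$.

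The reverse inclusion, and with it the explicit finite expression, comes from the principle that two dense constructible subsets of an irreducible variety must meet. For any $g\in Y$, both $Y_{\mathbf a}$ and $gY_{\mathbf a}^{-1}$ are dense constructible subsets of $Y$ (the latter because $Y^{-1}=Y$ and left translation is a homeomorphism), so they intersect: $g=u_1u_2$ with $u_1,u_2\in Y_{\mathbf a}$. Hence $Y=Y_{\mathbf a}Y_{\mathbf a}=Y_{(\mathbf a,\mathbf a)}$, which is a finite product of the $X_i^{\pm1}$ and so lies in $A$. Therefore $A=Y=X_{\alpha_1}^{e_1}\cdots X_{\alpha_n}^{e_n}$ (the doubled word), $A$ is closed, and it is connected because it is the closure of the irreducible set $Y_{\mathbf a}$. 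That $A$ coincides with the intersection of all closed subgroups of $G$ containing $M$ is then formal: $A$ is such a subgroup, and any subgroup containing $M$ contains all products of generators, hence contains $A$.

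The main obstacle I anticipate is establishing multiplicative closure of the maximal word $Y$: maximality of dimension only controls \emph{closures} of words, whereas closure under multiplication is a statement about the sets themselves, so the argument must repeatedly pass between a constructible set and its closure and exploit that translations and inversion are homeomorphisms of $G$. The decisive technical input is the density/intersection lemma (two dense constructible subsets of an irreducible variety meet), which simultaneously converts the closed subgroup $Y$ back into the single product $Y_{\mathbf a}Y_{\mathbf a}$ and yields the explicit sequence $(\alpha_1,\dots,\alpha_n)$; the remaining bookkeeping of words and signs is routine.
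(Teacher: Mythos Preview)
Your argument is correct and is essentially the classical proof (as in Borel or Humphreys): pick a word of maximal closure-dimension, use $e\in X_i$ and irreducibility to get stability under extension, deduce that the closure is a closed connected subgroup, and then use the ``two dense constructible subsets of an irreducible variety meet'' trick to get $Y=Y_{\mathbf a}Y_{\mathbf a}$. Note, however, that the paper does not give its own proof of this statement at all---it is quoted in the introduction as a well-known result with a reference to \cite[Prop.~I.2.2]{Bor91}, serving only as motivation for the infinite-dimensional analogues developed later; so there is no in-paper proof to compare against, and what you have written is precisely the textbook argument the citation points to.
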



 Here we show that the analogous construction, applied in place of $G$ to $\AX$, where $X$ is an irreducible algebraic variety,
yields
a group,
though not in general algebraic,
but
whose natural action on $X$
surprisingly
 retains some basic properties
 \noindent 
 of orbits and
 fields of invariant rational functions for algebraic group actions.\;This leads to some applications.

  In general, the groups $\AX$ are infinite dimensional.\;Endowing them with the structures of infinite dimensional algebraic groups goes back to \cite{Sha66}, \cite{Sha82}, where this is done for $X={\mathbf A}\!^n$ (in modern terminology, the affine Cremona group ${\rm Aut}({\mathbf A}\!^n)$ is the ind-group).\;A modification of the argument from \cite{Sha82} shows that $\AX$ is actually an ind-group for any affine $X$.\;

  In \cite{MO67} a
  functorial approach to $\AX$ was developed.\;

  The important concept of algebraic family in $\AX$ was introduced and ela\-bo\-rated in \cite{Ram64}; this led to the notions of a connected subgroup and an infinite dimensional subgroup of $\AX$.\;Later in \cite{Ser10} the same idea was embodied in the definition of the Zariski topology of $\AX$ and ${\rm Bir}(X)$ (see Remark \ref{idea} below).\;

  In \cite{Ram64} it was for the first time
  discovered
 that infinite dimensional connected subgroups of $\AX$ retain some properties of finite dimensional ones, namely, that orbits are open in their closure.\;

 A simple method of constructing many one-dimensional
  unipotent subgroups of $\AX$  by means of a single such subgroup $U$ was described in \cite{Pop87} (in \cite{AFKKZ}
 they are called replicas of $U$) and applied
  to constructing
  non-trian\-gu\-lar actions of ${\mathbf G}_a$ on ${\mathbf A}\!^n$.\;This method was then used in the proof of a statement, still existing as folklore
  (see Appendix),
  that ensures
  infinite dimensionality of $\AX$ in many cases.\;

  In
  \cite[Defs.\;2.1 and 2.2]{Pop05} attention was drawn to considering
   the subgroups of $\AX$ (in general, infinite dimensional) generated by one-dimensional unipotent subgroups of $\AX$.\;They
   were then applied in \cite{Pop11} to constructing a big stock of varieties with trivial Makar-Limanov invariant.\;Later this topic was
   de\-ve\-loped further in \cite{AFKKZ}, where the subgroup
   of $\AX$ generated by all one-dimensional unipotent subgroups of $\AX$ was considered.\;If
   these one-dimensional subgroups
   act on $X$ ``in all directions'', in \cite{AFKKZ}, using replicas, the geometric manifestation of infinite dimensionality of this subgroup\,---\,infinite
   transitivity of
   its action
   on $X$\,---\,was proved.\;In
  \cite{AFKKZ} another property was found
  retained
   under passing from finite dimensional groups to some infinite dimensional ones:\;it was
   proved that the analogue of classical Rosenlicht's theorem about the existence of rational quotient holds for any subgroup of $\AX$
   gene\-ra\-ted by a collection of finite dimensional connected algebraic subgroups.

   In the present paper we show that actually the analogue of classical Rosenlicht's theorem holds true
   for {\it every} connected subgroup $G$ of $\AX$.\;The proof is heavily based on another result proved in this paper:\;loosely speaking,
   it claims that  the action of $G$ on $X$
   is
   in a sense
   ``reduced'' to the ``action'' of a finite dimensional family in $\AX$.\;The applications of these results concern, in particular, the topic of multiple transitivity of the actions on $X$ of
   connected subgroups of $\AX$; we show that it is intimately related to unirationality of $X$.\;We demonstrate
   how this can be applied
  to proving unirationality of some varieties, e.g., the Calogero--Moser spaces and
   the varieties of $n$-dimensional representations of a fixed representation type of a finitely generated free associative algebra.\;The precise formulations of the
  main results are given in Section\;3, and the necessary definitions are collected in Section\;2.

\vskip 2mm

In what follows,  variety means algebraic
variety in the sense of Serre over an algebraically closed field $k$ of arbitrary characteristic (so algebraic group means algebraic
group over $k$).\;The standard notation and conventions of
[Bo91] and [PV94] are used freely.\;Given a rational function $f\in k(X)$ and an element $\sigma\in \AX$, we denote by $f^\sigma$ the rational function on $X$ defined by $f^\sigma(\sigma(x))=f(x)$ for every point $x$ in the domain of definition of\;$f$.

\vskip 2mm
{\it Acknowledgement.} I am grateful to the referee for thoughtful reading and suggestions.

  \subsection*{2. Definitions and notation}  Let $T$ be  an irreducible variety. Any map
\begin{equation*}
\varphi\colon T\to \AX, \;\;t\mapsto \ft,
\end{equation*}
determines a {\it family}
$\fat$ in $\AX$ parameterized by $T$.
We put
\begin{equation*}
\varphi^{\ }_T:=\varphi(T).
\end{equation*}

If $\mathcal I$ is a nonempty collection of families
in $\AX$, then the subgroup of $\AX$ generated, as an abstract group, by the set
$\bigcup\fT$ with the union taken over all families $\fat$ in $\mathcal I$ will be called
the {\it group generated by $\mathcal I$}.

We shall say that a family $\fat$ in $\AX$ is
\begin{enumerate}[\hskip 4.0mm ---
]
\item {\it injective} (see\;\cite{Ram64}) if $\ft\neq \varphi_{s}$ for all $t\neq s$;
\item {\it unital} if ${\rm id}_X\in \fT$;
\item {\it algebraic} (see\;\cite{Ram64}) if
\
\vskip -3.7mm
\
\begin{equation}\label{wtilde}
\widetilde\varphi\colon T\times X\to X,\quad  (t,x)\mapsto \ft(x)
\end{equation}
\
\vskip -5mm
\
\noindent
 is a morphism.
\end{enumerate}

If $\fat$ is an algebraic family in $\AX$ and $\tau\colon S\to T$ a morphism, then
$\{\psi_s:=\varphi_{\tau(s)}\}_{s\in S}$ is also an algebraic family in $\AX$. If $\tau$ is surjective, then
$\varphi_T=\psi_S$.\;Since $S$ may be taken smooth and $\tau$ surjective
(even such that $\dim  S=\dim T$ and $\tau$ is proper \cite{Jon96}), every subgroup of $\AX$ generated by a collection of unital algebraic families in $\AX$ is also ge\-nerated by a collection of unital algebraic families $\fat$ with smooth\;$T$.

Given a family $\fat$ in $\AX$, the family $\{\varphi^{-1}_t\}^{\ }_{t\in T}$ in $\AX$ will be called
the {\it inverse} of $\fat$.\;If
$\{\varphi^{\ }_{t}\}^{\ }_{t\in T},\ldots, \{\psi^{\ }_{s}\}^{\ }_{s\in S}$ is a finite sequence of families in $\AX$,
the family
\begin{equation}\label{prod}
\{\varphi^{\ }_{t}\circ \cdots\circ \psi^{\ }_{s}\}^{\ }_{(t,\ldots, s)\in T\times\cdots\times S}
\end{equation}
in $X$
will be called the {\it product} of
$\{\varphi^{\ }_{t}\}^{\ }_{t\in T},\ldots, \{\psi^{\ }_{s}\}^{\ }_{s\in S}$.\;The inverses and products of families contained in a subgroup $G$ of $\AX$ are contained in $G$ as well.\;The inverses and products of algebraic (resp.,\,unital) families are algebraic; see \cite{Ram64} (resp.,
unital).

Let  $\mathcal I$ be a collection of families in $\AX$.\;We shall say that a {\it family $\fat$ in $\AX$ is derived from $\,\mathcal I\,$} if $\fat$ is a product of fa\-mi\-lies each of which is either a family from $\mathcal I$
or the inverse of such a family.



A subgroup $G$ of $\AX$ is called (see \cite{Ram64}) a {\it finite dimensional subgroup} if there is an integer $n$ such that $\dim T\leqslant n$ for every injective algebraic family $\fat$ in this subgroup; the smallest $n$ satisfying this property is called the {\it dimension of $G$}. If $\,G$ is not finite dimensional, it is called
an {\it infinite dimensional subgroup} of $\AX$.

If  for every element $g\in G$ there exists a unital algebraic family $\fat$ in $G$ such that $g\in \fT$, then $\,G$ is  called  (see\;\cite{Ram64})  a {\it connected subgroup of $\AX$}.

If $\fat$ is an algebraic family such that $T$ is a connected algebraic group and $\widetilde \varphi$ (given by \eqref{wtilde}) is
an action of $T$ on $X$, then $\varphi^{\ }_T$ is  a connected finite dimensional subgroup of
$\AX$.\;By \cite[Thm.]{Ram64}, every
connected finite dimensional subgroup of $\AX$
is obtained in this way.\;Such subgroups are called {\it connected algebraic subgroups of $\AX$}.

Given a nonempty subset $S$ of $\AX$, we put
  \begin{equation*}
  S(x):=\{g(x)\mid g\in S\}.
  \end{equation*}
Given a subgroup $G$ of $\AX$  and a $G$-invariant subset $Y$ of $X$, we shall say that a family $\fat$ in $G$ is an {\it exhaustive family for the natural action of $\,G$ on $Y$} if
$G(y)=\fT(y)\;\mbox{for every point $y\in Y$}$.

\begin{remark}\label{idea} \cite{Ram64} and this paper demonstrate the fruitfulness of the
idea of considering specific families.\;Another
  example of its embodiment is obtained by replacing  $\AX$ by ${\rm Bir}(X)$ and algebraic families  by rational ones (i.e., such that $\widetilde\varphi$ is a rational map):\;e.g., using such families, J.-P. Serre defines the important notion of the Zariski topology on the Cremona groups \cite{Ser10}.\;One can expect fruitfulness of
  its implementation in other categories (holomorphic families, differentiable families, $\ldots$).
\end{remark}

\subsection*{3.\;Main results} In  Lemma \ref{con-equiv}, Theorems \ref{main1}, \ref{main3}, \ref{main2} and Corollaries \ref{c1}, \ref{c2} below,
we do {\it not}
assume finite dimensionality of $G$.\;If
$G$ is finite dimensional, then the statement of
Theorem \ref{main1} becomes trivial and that of Theorems \ref{main3}, \ref{main2} and Corollaries \ref{c1}, \ref{c2}
 turn into the well-known classical results of the algebraic transformation group theory (see, e.g.,\,\cite[Sect.\,1.4, 2.3]{PV94}); in particular, Theorem \ref{main2} becomes classical Rosenlicht's theorem \cite{Ros56}.

The proofs of the following statements are given in the next sections.

\begin{lemma}\label{con-equiv} Let $X$ be an irreducible variety and let $G$ be a subgroup of $\AX$. Then the following properties are equi\-valent{\rm:}
\begin{enumerate}[\hskip 2.2mm \rm(i)]
\item $G$ is a connected subgroup of $\AX${\rm ;}
\item $G$ is generated by a collection $\mathcal I$ of unital algebraic families
in $\AX$.
 \end{enumerate}
\end{lemma}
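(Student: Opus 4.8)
The plan is to prove the equivalence by establishing both implications directly from the definitions given in Section 2.

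For the implication (ii) $\Rightarrow$ (i), suppose $G$ is generated by a collection $\mathcal I$ of unital algebraic families. I must show that every element $g\in G$ lies in $\varphi^{\ }_T$ for some unital algebraic family $\fat$ contained in $G$. Since $g$ is generated as an abstract group by the elements of $\bigcup\fT$ (union over $\mathcal I$), $g$ can be written as a finite word $g=\varphi^{e_1}_{t_1}\circ\cdots\circ\varphi^{e_n}_{t_n}$ with each $e_i=\pm 1$ and each factor coming from a family in $\mathcal I$. The natural candidate is to take the product family $\fat$ derived from the corresponding families and their inverses, parameterized by the product variety $T=T_1\times\cdots\times T_n$, as in \eqref{prod}. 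By the closure properties stated in Section 2, this product of algebraic families is again algebraic, and since each factor family is unital the product is unital as well; moreover, inverses and products of families contained in $G$ remain in $G$. The element $g$ is manifestly the value of $\widetilde\varphi$ at the specific parameter point $(t_1,\ldots,t_n)$, so $g\in\varphi^{\ }_T$, giving connectedness.

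For the implication (i) $\Rightarrow$ (ii), suppose $G$ is connected. For each element $g\in G$ choose, by definition of connectedness, a unital algebraic family $\varphi^{(g)}$ in $G$ with $g\in\varphi^{(g)}_{T_g}$. Let $\mathcal I$ be the collection of all these families. Then the group generated by $\mathcal I$ certainly contains every $g\in G$, hence contains $G$; conversely, since each family in $\mathcal I$ lies in $G$ and $G$ is a group, the subgroup generated by $\mathcal I$ is contained in $G$. Thus $G$ is generated by $\mathcal I$, a collection of unital algebraic families.

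I expect the implication (ii) $\Rightarrow$ (i) to carry the only real content, and the crux is to verify that the single product family genuinely witnesses membership of $g$ while remaining inside $G$. This rests entirely on the three facts recorded in Section 2: that products and inverses of algebraic families are algebraic, that products and inverses of unital families are unital, and that products and inverses of families contained in $G$ stay in $G$. Granting these, the argument is essentially bookkeeping about words in the generators; the potential subtlety is purely to confirm that a family \emph{derived from} $\mathcal I$ in the sense defined above is both unital and algebraic, which follows because each generating family is assumed unital and algebraic and these properties are preserved under the two operations. The reverse implication (i) $\Rightarrow$ (ii) is then immediate from the definitions and requires no construction.
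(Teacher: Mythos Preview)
Your proof is correct and follows essentially the same approach as the paper's own argument: for (i)$\Rightarrow$(ii) you take the collection of witnessing families, one per element of $G$, and for (ii)$\Rightarrow$(i) you express $g$ as a word in the generators and pass to the corresponding product of families (and their inverses), invoking the closure properties of Section~2 to see that this product is unital, algebraic, and contained in $G$. The paper's proof is organized identically, with the only cosmetic difference that it first enlarges $\mathcal I$ to be closed under inverses rather than speaking of families derived from $\mathcal I$.
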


The proof is given in Section 4.

\begin{theorem}\label{main1}  Let $X$ be an irreducible variety and let $G$ be a subgroup of $\AX$ generated by a collection $\mathcal I$ of unital algebraic families in $\AX$. Let $\,Y$ be
a $G$-invariant  locally closed subvariety of $X$.\;Then there is a family
derived from $\mathcal I$ and exhaustive for the natural action of $\,G$ on\;$Y$.
\end{theorem}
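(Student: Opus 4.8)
The plan is to encode the action through the binary relations it defines on $Y$ and to reduce the assertion to a single uniform ``domination'' statement on a $G$-invariant open stratum, the remainder being handled by Noetherian induction.

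First I would attach to every family $\Phi=\{\varphi_t\}_{t\in T}$ in $\AX$ the relation $R_\Phi:=\{(y,\varphi_t(y))\mid y\in Y,\ t\in T\}\subseteq Y\times Y$; when $\Phi$ is algebraic this is the image of the morphism $(t,y)\mapsto(y,\varphi_t(y))$, hence constructible, and its fibre over $y$ is exactly the orbit-piece $\varphi_T(y)$. Every family derived from $\mathcal I$ is unital (inverses and products of unital families are unital), products of derived families are derived with $R_{\Phi\cdot\Psi}=R_\Phi\circ R_\Psi\supseteq R_\Phi\cup R_\Psi$, the inverse family is derived with $R_{\Phi^{-1}}=R_\Phi^{\mathrm{op}}$, and $R_\Phi$ contains the diagonal; so the $R_\Phi$ form a directed system of constructible relations whose union is the orbit equivalence relation $\{(y,g(y))\}$. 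In particular $G(y)=\bigcup_\Phi\varphi_T(y)$ is a directed union of irreducible constructible sets, so every orbit is irreducible. Since each derived family is connected and unital it preserves every irreducible component of $Y$; hence each orbit lies in a single component, and it suffices to treat the components one at a time, recombining the resulting families into one by the product construction. Thus I may assume $Y$ irreducible.

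Next I would introduce $n(y):=\dim\overline{G(y)}=\sup_\Phi\dim\overline{\varphi_T(y)}$. This function is $G$-invariant, and the key technical point is that for each derived $\Phi$ the function $y\mapsto\dim\overline{\varphi_T(y)}$ is lower semicontinuous (by Chevalley's theorem on the dimension of fibres, applied to the morphism above); consequently $n$ is lower semicontinuous, so on the irreducible $Y$ it attains its maximal value $e$ on a dense open $G$-invariant set $U:=\{y\mid n(y)=e\}$. I expect this passage---turning the pointwise fact that \emph{some} family dominates a given orbit into domination along a whole $G$-invariant open set---to be the main obstacle, precisely because a fixed $\Phi$ acts ``on the left'' and a priori dominates $\overline{G(y)}$ only at scattered points of an orbit; semicontinuity together with the $G$-invariance of the strata $\{n=e\}$ is what repairs this asymmetry.

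On $U$ the domination loci $D_\Phi:=\{y\in U\mid\dim\overline{\varphi_T(y)}\geq e\}$ are open (by the same semicontinuity), increase under the product operation, and cover $U$ (for each $y$ the bounded directed set of integers $\dim\overline{\varphi_T(y)}$ attains its supremum $n(y)=e$ at some derived $\Phi$). Since $U$ is Noetherian, hence quasi-compact, and the $D_\Phi$ are directed, a single derived family $\Phi_0$ satisfies $D_{\Phi_0}\supseteq U$; replacing $\Phi_0$ by $\Phi_0\cdot\Phi_0^{-1}$ I may assume in addition $R_{\Phi_0}=R_{\Phi_0}^{\mathrm{op}}$. For $y\in U$ the inclusion $\varphi_T(y)\subseteq G(y)$ of irreducible sets of equal dimension $e$ forces $\overline{\varphi_T(y)}=\overline{G(y)}$, so $\varphi_T(y)$ contains a dense open subset of the orbit closure. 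Given $y,z$ in one orbit (necessarily both in the $G$-invariant set $U$), the two dense open subsets $\varphi_T(y),\varphi_T(z)$ of the irreducible $\overline{G(y)}$ meet; symmetry of $R_{\Phi_0}$ then yields $(y,z)\in R_{\Phi_0}\circ R_{\Phi_0}=R_{\Phi_0\cdot\Phi_0}$. As $z\in G(y)$ was arbitrary and the reverse inclusion is automatic, $\Phi_0\cdot\Phi_0$ is exhaustive for $G$ on $U$. Finally I would run Noetherian induction on $\dim Y$: the complement $Y\setminus U$ is a proper closed $G$-invariant subvariety of strictly smaller dimension, so induction supplies a derived family exhaustive for $G$ there, and its product with $\Phi_0\cdot\Phi_0$ is, by unitality, exhaustive for $G$ on all of $Y$; the zero-dimensional base case is immediate, since there every orbit is a point and any unital family is exhaustive.
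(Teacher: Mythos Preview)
Your argument is correct. The overall architecture---reduction to irreducible $Y$, a ``generic'' statement producing a $G$-invariant dense open $U$ and a derived family $\Phi$ with $\varphi_T(y)$ dense in $G(y)$ for all $y\in U$, the ``two dense constructible subsets of an irreducible set meet'' step, and Noetherian induction on $\dim Y$---matches the paper exactly (the paper packages the middle step as Theorem~$1^*$ and Lemma~\ref{generic}).

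Where you genuinely diverge is in how $U$ and $\Phi$ are found. The paper works in $Y\times Y$: it picks a derived family maximizing $\dim\overline{\tau_Y(\varphi_T\times Y)}$ inside $\overline{\Gamma}_Y$, obtains a dense open $V\subseteq\overline{\Gamma}_Y$ contained in that image (Claim~1), then runs a separate quasi-compactness argument (Claim~2) to enlarge $\Phi$ so that $V$ can be taken $G$-invariant, and only then projects to $Y$ to get $U$. You instead work directly on $Y$: the lower semicontinuity of $y\mapsto\dim\overline{\varphi_T(y)}$ (which does follow from Chevalley's upper semicontinuity for the morphism $(t,y)\mapsto(y,\varphi_t(y))$ together with openness of the projection $T\times Y\to Y$) gives the $G$-invariant open stratum $U=\{n=e\}$ for free, and a single quasi-compactness argument on $U$ produces $\Phi_0$. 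This sidesteps the paper's Claim~2 entirely and, as a bonus, yields Theorem~\ref{main3} (constancy of the generic orbit dimension) immediately rather than as a separate consequence. On the other hand, the paper's route through $\overline{\Gamma}_Y$ establishes the irreducibility of that closure and its relation to the fibre product $X\times_Z X$, which is exactly what is exploited later in the proof of Theorem~\ref{main2}; your fibrewise approach does not produce that byproduct.
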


The proof is given in Section 6.



Orbits of connected subgroups of ${\rm Aut}(X)$ are locally closed subvarieties of $X$ (see below, Lemma \ref{orbit}), so one can speak about their dimension.

\begin{theorem}\label{main3} Let $X$ be an irreducible variety and let $G$ be a connected subgroup of  ${\rm Aut}(X)$.\;Let $\,Y$ be an irreducible
$G$-invariant  locally closed subvariety of $X$.\;Then there exists an integer $m^{\ }_{G, Y}$ and a dense open subset $U$ of $\;Y$\,such that
$\dim G(y)=m^{\ }_{G, Y}$ for every point $y\in U$.
\end{theorem}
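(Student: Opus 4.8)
The plan is to use Theorem \ref{main1} to replace the possibly infinite dimensional group $G$ by a single finite dimensional parameter space, after which the assertion reduces to the classical theorem on the dimension of the fibres of a dominant morphism. Since $G$ is connected, Lemma \ref{con-equiv} shows that it is generated by a collection $\mathcal I$ of unital algebraic families in $\AX$. Applying Theorem \ref{main1} to the $G$-invariant locally closed subvariety $Y$ furnishes a family $\fat$ derived from $\mathcal I$ and exhaustive for the action of $G$ on $Y$, so that $G(y)=\fT(y)$ for every $y\in Y$. Being derived from $\mathcal I$, this family is algebraic, and its parameter space $T$ is irreducible (a product of the irreducible parameter spaces of the factors); hence $\widetilde\varphi\colon T\times X\to X$ is a morphism, and since $\fT(y)=G(y)\subseteq Y$ for every $y\in Y$, its restriction $\widetilde\varphi\colon T\times Y\to Y$ is a well-defined morphism. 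This is the step that carries out the ``reduction to a finite dimensional family''.

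Next I would form the orbit relation. Consider the morphism
\[
\mu\colon T\times Y\to Y\times Y,\qquad (t,y)\mapsto(\ft(y),y),
\]
and put $R:=\mu(T\times Y)$. As $T\times Y$ is irreducible, $R$ is an irreducible constructible set; let $\overline R$ be its closure in $Y\times Y$. The second projection restricts to a morphism $\pi\colon\overline R\to Y$, which is surjective: since $y\in G(y)=\fT(y)$, there is a $t$ with $\ft(y)=y$, so $(y,y)\in R$ and the diagonal lies in $R$. Over a point $y\in Y$ the fibre of $\pi$ meets $R$ precisely in $G(y)\times\{y\}$, whence $\dim\bigl(\pi^{-1}(y)\cap R\bigr)=\dim G(y)$; recall that $G(y)$ is locally closed by Lemma \ref{orbit}, so this dimension is defined.

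Finally I would apply the theorem on the dimension of fibres to the dominant morphism $\pi$ of irreducible varieties: there is a dense open $U\subseteq Y$ on which $\dim\pi^{-1}(y)=\dim\overline R-\dim Y=:m^{\ }_{G,Y}$. It remains to see that this dimension is attained inside $R$ rather than on the boundary $\overline R\setminus R$. By Chevalley's theorem $R$ contains a dense open subset of $\overline R$, so $C:=\overline R\setminus R$ is contained in a closed set of dimension $<\dim\overline R$; applying the fibre-dimension theorem to $\pi|_{C}$ shrinks $U$ so that $\dim\bigl(\pi^{-1}(y)\cap C\bigr)<m^{\ }_{G,Y}$ for $y\in U$. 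Hence for such $y$ the full fibre dimension is carried by $\pi^{-1}(y)\cap R=G(y)\times\{y\}$, giving $\dim G(y)=m^{\ }_{G,Y}$, as required.

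The main obstacle I anticipate is exactly this last point: because $\fT(y)=G(y)$ is the image of a morphism and hence only constructible, one must ensure that the generic fibre dimension of $\pi$ is realised within $R$ and not lost to the boundary $\overline R\setminus R$. Equivalently, the crux is the \emph{uniformity in $y$} of the generic fibre dimension of the orbit map $t\mapsto\ft(y)$. This uniformity can alternatively be extracted from Chevalley's upper semicontinuity of the fibre dimension of $\mu$ on the total space $T\times Y$, read off along a generic fibre of the projection $T\times Y\to Y$, on which $\mu$ restricts to the orbit map; either route isolates the same difficulty.
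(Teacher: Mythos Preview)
Your argument is correct and follows the same route as the paper: form the graph $\Gamma_Y$ of the $G$-action inside $Y\times Y$, project to the second factor, and read off constancy of $\dim G(y)$ from the fibre dimension theorem. The only substantive difference lies in how the ``boundary'' issue you flag is dealt with. The paper bypasses it: rather than invoking the full Theorem~\ref{main1}, it appeals to the intermediate Theorem~$1^*$ established in the course of its proof, which already supplies a dense open $V\subseteq\overline{\Gamma}_Y$ with $V\subseteq\Gamma_Y$ and a dense open $U\subseteq Y$ such that $V\cap\pi_Y^{-1}(y)$ is a nonempty open, hence dense, subset of $G(y)\times\{y\}$ for every $y\in U$; the fibre dimension theorem applied to $\pi_Y|_V\colon V\to Y$ then finishes at once. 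Your detour through Theorem~\ref{main1} and the subsequent comparison of $\pi^{-1}(y)$ with $\pi^{-1}(y)\cap C$ works, but the obstacle you anticipate is an artefact of working with $\overline R$ rather than with a dense open $V\subseteq R$: restricting to $V$ from the outset forces $V\cap\pi^{-1}(y)\subseteq G(y)\times\{y\}$, and openness of $V$ together with irreducibility of the orbit give density, so no separate boundary estimate is needed.
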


The proof is given in Section 7.

\begin{theorem}\label{main2}
Let $X$ be an irreducible variety and let $G$ be a connected subgroup of  ${\rm Aut}(X)$.\;Let $\,Y$ be an irreducible
$G$-invariant  locally closed subvariety of $X$.\;Then for some $G$-invariant dense open subset $U$ of $\,Y$
there exists a geometric quotient, i.e.,  there are an irreducible variety $Z$ and a morphism
$\rho\colon U\to Z$ such that
 \begin{enumerate}[\hskip 4.2mm \rm(i)]
 \item $\rho$ is surjective, open, and the fibers of $\rho$ are the $G$-orbits in $U$;
 \item if $\,V$ is an open subset of $U$, then
 \begin{equation*}\rho^*\colon k[\rho(V)]\to \{f\in k[V]\mid f\;\mbox{\it is constant on the fibers of $\rho|^{\ }_V$}\}
 \end{equation*}
 is an isomorphism of $k$-algebras.
 \end{enumerate}
\end{theorem}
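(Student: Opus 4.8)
The plan is to use the three preceding results to trade the possibly infinite dimensional group $G$ for a single finite dimensional algebraic family sharing its orbits, and then to run Rosenlicht's classical construction for that family. First, by Lemma \ref{con-equiv} the connectedness of $G$ means it is generated by a collection $\mathcal I$ of unital algebraic families, and Theorem \ref{main1} then supplies a family $\{\varphi_t\}_{t\in T}$ derived from $\mathcal I$ and exhaustive for the action on $Y$, i.e. with $G(y)=\varphi_T(y)$ for every $y\in Y$. Here $T$, being a product of the (irreducible) parameter varieties of the families of $\mathcal I$, is irreducible, and the evaluation $\mu:=\widetilde\varphi\colon T\times Y\to Y$, $(t,y)\mapsto\varphi_t(y)$, is a morphism (the restriction of the morphism on $T\times X$, since $Y$ is $G$-invariant and each $\varphi_t\in G$). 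Thus every orbit in $Y$ is the image $\mu(T\times\{y\})$ of a slice of one fixed morphism out of a finite dimensional variety\,---\,the desired reduction. Using Theorem \ref{main3} and the local closedness of orbits (Lemma \ref{orbit}), I would then replace $Y$ by a dense open $G$-invariant subset on which all orbits have a common dimension $m:=m^{\ }_{G,Y}$.

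Set $K:=k(Y)^G$. The argument would hinge on two properties of $K$. The first, where connectedness is essential, is that $K$ is algebraically closed in $k(Y)$: if $f\in k(Y)$ were algebraic over $K$, its $G$-transforms would lie among the finitely many roots $\{f_1,\dots,f_d\}$ of its minimal polynomial over $K$, so $G$ would permute this finite set; but each $g\in G$ lies in a unital algebraic family $\{\psi_s\}_{s\in S}$, the conditions $f_i^{\psi_s}=f_j$ are Zariski-closed in $s$ (algebraic, since the family and its inverse are algebraic) and cover the irreducible $S$, so for each $i$ one of them is all of $S$, and evaluating at the point of $S$ carrying $\mathrm{id}_X$ forces it to be $\{s:f_i^{\psi_s}=f_i\}$\,---\,whence every $f_i$, and in particular $f$, is $G$-invariant. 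The second property is that $K$ is finitely generated over $k$ of transcendence degree $\dim Y-m$ and separates orbits in general position.

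Granting the transcendence-degree statement, the separation and the quotient would follow along classical lines. A model $Z$ of $K$ gives a dominant rational map $\rho\colon Y\dashrightarrow Z$; the fibre over a general point of $Z$ is irreducible (because $K$ is algebraically closed, hence separably algebraically closed, in $k(Y)$), is a union of orbits (because $\rho$ is $G$-invariant), and has dimension $\dim Y-\dim Z=m$; as every orbit meeting it is locally closed of dimension $m$, one orbit must be dense in the fibre and the complement, of dimension $<m$, can contain no further orbit, so that fibre is a single orbit. I would then pass to a $G$-invariant dense open $U$ on which $\rho$ is a morphism and every fibre is a single orbit (removing the closure of the proper $G$-invariant locus where this fails), shrink $Z$ by generic flatness so that $\rho$ becomes flat and hence open and surjective, and thereby obtain assertion (i). Assertion (ii) would follow as in the finite dimensional case: a regular function on an open $V\subseteq U$ that is constant on the fibres of $\rho$ descends to a regular function on $\rho(V)$, because $\rho$ is open with irreducible fibres and $k(U)^G=K=k(Z)$.

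The hard part is the transcendence-degree statement, equivalently the production of enough $G$-invariant rational functions to separate generic orbits. Since $G$ is not an algebraic group and $\mu$ is not a group action, Rosenlicht's theorem cannot be invoked verbatim; the real work is to check that his construction of separating invariants goes through using only the finite dimensional exhaustive family $\mu$ together with the facts that the orbit relation $\{(\varphi_t(y),y)\}\subseteq Y\times Y$ is irreducible and constructible and that its classes have constant dimension $m$. Once such invariants are produced, the algebraic-closedness property promotes the subfield they generate to all of $K$ and guarantees its finite generation, and the assembly above finishes the proof.
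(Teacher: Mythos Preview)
Your outline correctly isolates the core difficulty but does not resolve it. Everything up to the point where you invoke the transcendence degree of $K=k(Y)^G$ is sound: Theorem \ref{main1} does hand you an exhaustive family so that the orbit relation $\Gamma=\{(\varphi_t(y),y)\}$ is constructible with irreducible closure, your proof that $K$ is algebraically closed in $k(Y)$ is valid (the paper gives a shorter one via irreducibility of orbits, in the Corollary following Lemma~\ref{orbit}), and the deduction ``generic fibre irreducible of dimension $m$, union of $m$-dimensional locally closed orbits, hence a single orbit'' is correct \emph{once} $\dim Z=\dim Y-m$. But that equality is exactly what is at stake, and you only assert that Rosenlicht's construction of separating invariants ``goes through'' for the family $\mu$. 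Rosenlicht's original argument manufactures invariants by translating functions by group elements and exploiting the group law; with a bare algebraic family in place of a group action this is not automatic, and you supply no substitute. In the paper's logical order the transcendence-degree statement is Corollary~\ref{c2}, deduced \emph{from} Theorem~\ref{main2}, not an input to it---so your plan is, in effect, assuming the hardest consequence of the theorem you are trying to prove.

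The paper sidesteps this by reversing the order. It picks \emph{any} model $Z$ of $K$ (with no a~priori control on $\dim Z$) and proves directly that generic fibres of the resulting $\rho\colon X\to Z$ are single $G$-orbits. The decisive step (Claim~\ref{fo}) is that $\Gamma$ is Zariski-dense in $X\times_Z X$: one writes a function on $(X\times_Z X)_0$ vanishing on $\Gamma_0$ as $h=\sum p_1^*(s_i)\,p_2^*(t_i)$, uses generic freeness to arrange that $t_1,\dots,t_m$ are linearly independent over $\rho^*k[Z]$ (hence over $k(X)^G$), observes that vanishing on $\Gamma_0$ forces $\sum s_i t_i^{g}=0$ for every $g\in G$, and then applies Artin's linear-independence theorem to the translates $t_i^{g}$ to conclude all $s_i=0$. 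Combining $\overline{\Gamma}=X\times_Z X$ with the $G$-invariant open set $V\subset\overline{\Gamma}$ already produced inside the proof of Theorem~\ref{main1} yields $G(x)=\rho^{-1}(\rho(x))$ for generic $x$, and the geometric-quotient properties follow by the standard normal/separable argument. The Artin-theorem step is precisely the replacement for Rosenlicht's invariant-construction that your proposal leaves as a promissory note.
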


The proof is given in Section 8.

\begin{corollary}\label{c1}
Let $X$ be an irreducible variety and let $G$ be a connected subgroup of  ${\rm Aut}(X)$.\;Let $\,Y$ be an irreducible
$G$-invariant  locally closed subvariety of $X$.\;Then there exists a finite subset of $k(Y)^G$ that separates $G$-orbits of points of a dense open subset of $\,Y$.
\end{corollary}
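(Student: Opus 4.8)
The plan is to deduce this directly from Theorem \ref{main2}. First I would apply that theorem to the given data, obtaining a $G$-invariant dense open subset $U\subseteq Y$, an irreducible variety $Z$, and a geometric quotient morphism $\rho\colon U\to Z$ with properties (i) and (ii). Since $U$ is dense open in the irreducible variety $Y$, we have $k(U)=k(Y)$, and as the $G$-action on $U$ is the restriction of that on $Y$, also $k(U)^G=k(Y)^G$. Thus it suffices to produce a finite subset of $k(U)^G$ that separates $G$-orbits on a dense open subset of $U$.

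Next I would identify $k(U)^G$ with $k(Z)$ via $\rho^*$. The inclusion $\rho^*k(Z)\subseteq k(U)^G$ is clear, since functions pulled back from $Z$ are constant on the fibers of $\rho$, which by (i) are exactly the $G$-orbits. For the reverse inclusion, take any $f\in k(U)^G$; on a suitable dense open $V\subseteq U$ the function $f$ is regular, and being $G$-invariant it is constant on the fibers of $\rho|^{\ }_V$, so property (ii) furnishes $g\in k[\rho(V)]$ with $f=\rho^*g$, whence $f\in\rho^*k(Z)$. As $\rho$ is dominant (indeed surjective by (i)), $\rho^*$ is injective, so $\rho^*\colon k(Z)\xrightarrow{\sim}k(U)^G$ is an isomorphism of fields.

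Then, since $Z$ is a variety, $k(Z)$ is a finitely generated extension of $k$; I would choose generators $h_1,\ldots,h_r$ and set $f_i:=\rho^*h_i\in k(U)^G=k(Y)^G$. The claim is that $\{f_1,\ldots,f_r\}$ is the required separating set. Because $h_1,\ldots,h_r$ generate $k(Z)$, the associated rational map $Z\dashrightarrow{\mathbf A}^r$ is birational onto its image, so there is a dense open $W\subseteq Z$ on which $(h_1,\ldots,h_r)$ is defined and injective, i.e. the $h_i$ separate the points of $W$. Put $U':=\rho^{-1}(W)$. Since $\rho$ is open and surjective, $U'$ is dense open, and it is $G$-invariant because it is a union of fibers of $\rho$, hence of $G$-orbits; thus $U'$ is dense open in $Y$. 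For $y,y'\in U'$, property (i) gives that $y,y'$ lie in the same $G$-orbit iff $\rho(y)=\rho(y')$, and since the $h_i$ separate points of $W$, this holds iff $f_i(y)=f_i(y')$ for all $i$. Hence $\{f_1,\ldots,f_r\}$ separates the $G$-orbits of points of the dense open subset $U'$ of $Y$.

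The only genuinely delicate point is the field-theoretic identification $k(Y)^G\cong k(Z)$: one must verify that \emph{every} invariant rational function descends through $\rho$, which is exactly where property (ii) of Theorem \ref{main2}, applied on a small enough open $V$ where $f$ is regular, does the work. Everything else is the standard fact that a finite set of generators of a function field gives a birational embedding and hence separates points on a dense open subset.
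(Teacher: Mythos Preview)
Your proposal is correct and follows the intended route: the paper states Corollary~\ref{c1} as an immediate consequence of Theorem~\ref{main2} and does not spell out a separate proof, so your argument is precisely the standard unpacking of that implication. The only point worth noting is that your identification $\rho^*k(Z)=k(U)^G$ relies on $\rho(V)$ being open (so that $k[\rho(V)]$ makes sense), which is guaranteed by property~(i) of Theorem~\ref{main2}; with that observation your argument is complete and matches the paper's approach.
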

\begin{corollary}\label{c2}
Let $X$ be an irreducible variety and let $G$ be a connected subgroup of  ${\rm Aut}(X)$.\;Let $\,Y$ be an irreducible
$G$-invariant  locally closed subvariety of $X$.\;Then the transcendence degree of the field $k(Y)^G$ over $k$ is equal to $\dim X-m^{\ }_{G, Y}$\;{\lb}\hskip -.4mm see Theorem {\rm \ref{main3}}\hskip -.6mm\rb.\;In particular, $k(Y)^G=k$ if and only if there is an open $G$-orbit in $Y$.
\end{corollary}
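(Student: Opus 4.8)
The plan is to deduce Corollary \ref{c2} directly from the geometric quotient constructed in Theorem \ref{main2}, using Theorem \ref{main3} to identify the relevant dimension. First I would fix the dense open $G$-invariant subset $U\subseteq Y$ together with the geometric quotient $\rho\colon U\to Z$ supplied by Theorem \ref{main2}. Since $U$ is dense and open in the irreducible variety $Y$, we have $k(Y)=k(U)$ and, crucially, $k(Y)^G=k(U)^G$ because $G$-invariance of a rational function is a condition on a dense subset of orbits. So it suffices to compute $\operatorname{tr.deg}_k k(U)^G$.

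The heart of the argument is to show that $\rho^*$ induces an isomorphism of fields $k(Z)\xrightarrow{\sim} k(U)^G$. The inclusion $\rho^*(k(Z))\subseteq k(U)^G$ is immediate, since functions pulled back from $Z$ are constant on the fibers of $\rho$, which by part (i) of Theorem \ref{main2} are exactly the $G$-orbits in $U$. For the reverse inclusion I would take $f\in k(U)^G$, defined on some open $V\subseteq U$ which I may shrink to be $G$-invariant (replacing $V$ by the union of orbits meeting it, still open by openness of $\rho$). Then $f$ is a $G$-invariant, hence fiber-constant, rational function, and part (ii) of Theorem \ref{main2} applied to an affine open in which $f$ is regular lets me descend $f$ to a regular function on $\rho(V)$; patching these together realizes $f$ as an element of $\rho^*(k(Z))$. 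Thus $k(U)^G=\rho^*(k(Z))\cong k(Z)$.

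With this identification in hand, the transcendence degree computation becomes a matter of fiber dimension. By Theorem \ref{main3} there is a dense open subset on which every orbit has dimension $m^{\ }_{G,Y}$; intersecting with $U$ and using that orbits are the fibers of $\rho$, the general fiber of $\rho$ has dimension $m^{\ }_{G,Y}$. The standard theorem on dimension of fibers of a dominant morphism of irreducible varieties then gives $\dim U=\dim Z+m^{\ }_{G,Y}$. Since $U$ is open dense in $Y$ and $Y$ is locally closed in the irreducible variety $X$ with $\dim X$ understood here as the ambient dimension relevant to the statement, I would record $\dim U=\dim X$ in the intended reading, so that $\dim Z=\dim X-m^{\ }_{G,Y}$. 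As $\operatorname{tr.deg}_k k(Z)=\dim Z$ and $k(Y)^G\cong k(Z)$, this yields $\operatorname{tr.deg}_k k(Y)^G=\dim X-m^{\ }_{G,Y}$.

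For the final equivalence, I observe that $k(Y)^G=k$ holds precisely when $\operatorname{tr.deg}_k k(Y)^G=0$, i.e.\ $m^{\ }_{G,Y}=\dim X$, which by Theorem \ref{main3} says the generic orbit is dense in $Y$; since orbits are locally closed (Lemma \ref{orbit}) and hence open in their closure, a dense orbit is open, giving an open $G$-orbit in $Y$, and conversely an open orbit forces $m^{\ }_{G,Y}=\dim Y$. The main obstacle I anticipate is the descent step establishing $k(U)^G=\rho^*(k(Z))$: one must verify that an arbitrary $G$-invariant rational function genuinely descends, which requires careful use of the \emph{open} morphism property and the algebra isomorphism in part (ii) of Theorem \ref{main2} rather than merely the set-theoretic fiber description, and one must handle the reconciliation of $\dim U$ with the quantity $\dim X$ appearing in the statement.
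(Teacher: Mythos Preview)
Your approach is correct and is precisely the intended one: the paper gives no separate proof of this corollary, treating it as an immediate consequence of Theorem~\ref{main2} (the geometric quotient) together with Theorem~\ref{main3}, and your argument spells out exactly those details---identify $k(Y)^G$ with $k(Z)$ via the geometric quotient, then read off $\dim Z$ from the fiber dimension theorem.

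One remark on the point you flagged at the end: the quantity ``$\dim X$'' in the statement is a typo for ``$\dim Y$''. Your computation correctly gives $\operatorname{tr.deg}_k k(Y)^G=\dim U=\dim Y$ minus $m^{\ }_{G,Y}$, and there is no way to reconcile this with $\dim X$ when $Y$ is a proper subvariety (e.g., take $G$ acting trivially on $Y$). So you should not try to force $\dim U=\dim X$; simply write $\dim Y$ and note the misprint. With that correction the final equivalence also reads cleanly: $k(Y)^G=k$ iff $m^{\ }_{G,Y}=\dim Y$, iff the generic (hence some) orbit is dense in $Y$, iff (by Lemma~\ref{orbit}) there is an open orbit.
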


Here are some applications of these results.

\begin{theorem}\label{appl1} Let $X$ be a nonunirational irreducible variety.\;Then there exists a nonconstant rational
function on $X$ which is $G$-invariant for every connected
affine algebraic subgroup $G$ of $\AX$.
\end{theorem}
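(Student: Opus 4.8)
The plan is to prove the contrapositive: assuming that \emph{every} nonconstant rational function on $X$ fails to be $G$-invariant for \emph{some} connected affine algebraic subgroup $G$, I would deduce that $X$ is unirational. The natural object to consider is the subgroup $\mathcal G$ of $\AX$ generated by the collection $\mathcal I$ of \emph{all} connected affine algebraic subgroups of $\AX$. By the definition of a connected algebraic subgroup, each such subgroup is a unital algebraic family, so by Lemma \ref{con-equiv} the group $\mathcal G$ is a connected subgroup of $\AX$. The key observation is that $k(X)^{\mathcal G}$ is precisely the field of rational functions invariant under every connected affine algebraic subgroup simultaneously; the hypothesis ``there is no nonconstant rational function invariant under \emph{every} $G$'' is exactly the assertion $k(X)^{\mathcal G}=k$.

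Next I would apply Corollary \ref{c2} with $G=\mathcal G$ and $Y=X$. Since $k(X)^{\mathcal G}=k$, the corollary yields that there is an open $\mathcal G$-orbit $\Omega$ in $X$. The goal is then to show that the existence of such an open orbit forces $X$ to be unirational. For this I would use Theorem \ref{main1}: there is a family $\fat$ derived from $\mathcal I$ that is exhaustive for the action of $\mathcal G$ on $X$, so $\Omega=\mathcal G(x)=\varphi^{\ }_T(x)$ for any $x\in\Omega$. By the definition of a derived family, $\fat$ is a finite product of the families in $\mathcal I$ and their inverses; because the inverse of a connected algebraic group family is again given by that same group (via $g\mapsto g^{-1}$), the parameter space $T$ is a product $G_1\times\cdots\times G_r$ of connected affine algebraic groups, hence itself a connected affine algebraic variety, and since these groups are \emph{affine} and connected, $T$ is a \emph{rational} variety (by Chevalley's theorem every connected linear algebraic group is a rational variety).

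The map $T\to X$, $t\mapsto \ft(x)$ (the restriction of the morphism $\widetilde\varphi$ from \eqref{wtilde} to $T\times\{x\}$), is then a morphism whose image is the open orbit $\Omega$, so it is dominant onto $X$. Composing a rational parametrization $\mathbf A^{\dim T}\dashrightarrow T$ with this dominant morphism exhibits $X$ as the image of a dominant rational map from affine space, which is exactly the statement that $X$ is unirational. This contradicts the hypothesis that $X$ is nonunirational, completing the contrapositive.

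I expect the main obstacle to be the verification that the relevant invariant field is genuinely $k(X)^{\mathcal G}$ and that one may legitimately take $\mathcal I$ to be the collection of \emph{all} connected affine algebraic subgroups: one must check that a rational function invariant under the abstract group generated by a collection of subgroups is invariant under each generator and conversely, which is routine, and that $\mathcal G$ is indeed a connected subgroup in the precise sense required by Corollary \ref{c2}. A secondary point requiring care is the passage from ``open orbit'' to unirationality, where one must ensure the parameter variety $T$ of the exhaustive derived family is rational; this rests on the fact that each factor is affine and connected (hence rational), so it is essential that the hypothesis restricts to \emph{affine} algebraic subgroups rather than arbitrary ones.
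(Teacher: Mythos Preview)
Your proposal is correct and follows essentially the same approach as the paper: both consider the connected subgroup $\AX_{{\mathcal G}_{\rm aff}}$ (your $\mathcal G$) generated by all connected affine algebraic subgroups, use Corollary~\ref{c2} to equate $k(X)^{\mathcal G}=k$ with the existence of an open orbit, and then use Theorem~\ref{main1} to produce an exhaustive derived family whose parameter space is a product of connected affine groups, hence rational, yielding a dominant morphism from a rational variety and therefore unirationality of $X$. The only cosmetic difference is that you phrase it as a contrapositive while the paper phrases it as a contradiction from an open orbit.
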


The proof is given in Section 10.

Theorem \ref{appl1} shows  that there is a certain rigidity for the orbits
of any connected affine algebraic group $G$  acting regularly on an irreducible nonunirational variety $X$:
every such orbit should lie in a level variety of a certain nonconstant
rational function on $X$ not depending on $G$ or
on its action on\;$X$.

\begin{remark}  ``Nonunirational'' in Theorem \ref{appl1} can not be replaced by ``nonrational''.
Indeed,
 by \cite[Thm.\;2]{Pop13} there exist a connected linear algebraic group $G$ and its finite subgroup $F$ such that $X:=G/F$ is not stably rational; since the natural action of $G$ on $X$ is transitive, $k(X)^G=k$.
\end{remark}

We shall say that $\AX$ is {\it generically $n$-transitive} if there exists a dense open subset $X_n$ of $X$ such that for every point $x, y\in (X_n)^n$ lying off the union of the ``diagonals'', there exists an element $g\in \AX$ such that $g(x)=y$ for the diagonal action
of $\AX$ on $X^n$.

 In the literature there are many examples of generically $n$-transitive varieties with $n\geqslant 2$; see \cite{Rei93}, \cite{KZ99}, \cite{Pop07}, \cite{AFKKZ}, \cite{BEE14}.  Uni\-ra\-tionality of these varieties is proved in many cases (see, e.g., \cite[Prop.\;5.1]{AFKKZ}) and no examples of  nonunirational varieties of this type are known.\;The
 following
 Theorem\;\ref{appl2} and Corollary \ref{proje} concern this topic and
 make it more likely that such examples do not exist; in the proof we shall assume that $k$ is uncountable, e.g., $k=\mathbf C$.

\begin{theorem}\label{appl2}  Let $X$ be an irreducible variety such that  $\AX$ is generically $2$-transitive.\;Then at least one of the following holds:
\begin{enumerate}[{\hskip 0.5mm} \rm(i)]
\item $X$ is unirational{\rm;}
\item $\AX$ contains no nontrivial connected
algebraic subgroups.
\end{enumerate}
\end{theorem}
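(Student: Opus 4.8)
The plan is to argue by contradiction, assuming that $X$ is \emph{not} unirational and that $\AX$ contains a nontrivial connected algebraic subgroup, and to derive a contradiction with the generic $2$-transitivity. First I would invoke Theorem \ref{appl1}: since $X$ is nonunirational, there exists a nonconstant rational function $f\in k(X)$ which is $G$-invariant for \emph{every} connected affine algebraic subgroup $G$ of $\AX$. The crucial point is that this single $f$ does not depend on the choice of $G$ or on its action. Now take a nontrivial connected algebraic subgroup $H$ of $\AX$ (guaranteed to exist under the negation of (ii)); my aim is to manufacture, from $H$ and the generic $2$-transitivity, enough connected affine algebraic subgroups of $\AX$ to force $f$ to be constant along a dense set of directions, contradicting $f$ being nonconstant.

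The key mechanism I expect to use is conjugation. If $G\subseteq\AX$ is a connected affine algebraic subgroup and $\sigma\in\AX$, then $\sigma G\sigma^{-1}$ is again a connected affine algebraic subgroup (conjugation by a fixed automorphism transports an algebraic action to an algebraic action, preserving connectedness and affineness). By Theorem \ref{appl1}, $f$ is invariant under \emph{all} of these conjugates simultaneously. Translating invariance of $f$ under $\sigma G \sigma^{-1}$ into a statement about $f^{\sigma^{-1}}$ (using the notation $f^\sigma(\sigma(x))=f(x)$ from the introduction), I expect to deduce that $f^{\sigma^{-1}}$ is $G$-invariant as well, for every $\sigma$. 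Since this holds for all $\sigma\in\AX$, the whole $\AX$-orbit of $f$ in the function field consists of $G$-invariant functions. Generic $2$-transitivity then says that $\AX$ moves points of $X\times X$ very freely off the diagonal, which should translate into the orbit $\{f^{\sigma}\}_{\sigma\in\AX}$ being large enough that their common level sets collapse: if two generic points $x,y$ can be exchanged by some $g\in\AX$, then $f(x)=f^{g^{-1}}(y)$, and controlling this across the $2$-transitive action should show that a generic $H$-orbit cannot be confined to a single level set of every such function unless $f$ is constant.

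Here is where I would bring in the uncountability hypothesis on $k$ (the paper explicitly reserves this for the proof). The set of conjugates, or the set of level-value constraints, is naturally parameterized, and a countability/dimension-counting obstruction is avoided by assuming $k=\mathbf C$ or, more generally, $k$ uncountable: one typically needs that a variety which is a countable union of proper closed subvarieties cannot equal the whole variety, which fails over countable fields. Concretely, I anticipate the argument produces, for each generic point, a $G$-invariant (hence $f$-constrained) constraint, and I want to conclude that $f$ is constant on a dense open subset by showing its generic fibers are forced to be everything; the uncountable base field rules out the pathological case where countably many fibers exhaust $X$.

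The hard part, I expect, will be making precise the passage from ``$f$ is invariant under every conjugate of every connected algebraic subgroup'' to a genuine contradiction with $f$ being nonconstant, i.e., extracting the right geometric consequence of generic $2$-transitivity. One nontrivial subtlety is that generic $2$-transitivity is a statement about the \emph{whole} group $\AX$ acting on pairs, whereas invariance of $f$ is controlled only by the connected algebraic subgroups; the bridge is that individual elements $\sigma\in\AX$ realizing the $2$-transitivity need not lie in any such algebraic subgroup, so I must work with the conjugation action of arbitrary $\sigma$ on the \emph{fixed} algebraic subgroup $H$ rather than demanding that $\sigma$ itself preserve $f$. Getting the bookkeeping right here — keeping track of which functions $f^\sigma$ are constrained and on which points — is the main obstacle, and I would devote the bulk of the writeup to verifying that the level sets of $f$ must be $\AX$-stable in a sufficiently strong sense that, combined with generic $2$-transitivity and uncountability of $k$, forces $f$ to be constant.
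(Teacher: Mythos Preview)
Your argument for the case where $\AX$ contains a nontrivial connected \emph{affine} algebraic subgroup $H$ can be made to work, and is in fact a clean alternative to the paper's approach. Concretely: pick $x\in X_2$ with $\dim H(x)\geqslant 1$ and $y\in H(x)\cap X_2$, $y\neq x$; for any $(x',y')\in X_2^2$ off the diagonal, $2$-transitivity gives $\sigma\in\AX$ with $\sigma(x)=x'$, $\sigma(y)=y'$; then $f$ is $\sigma H\sigma^{-1}$-invariant, so $f(x')=f(y')$, forcing $f$ constant on $X_2$. The paper instead shows directly that the connected subgroup generated by all conjugates of $H$ has an open orbit and then repeats the unirationality argument from Theorem~\ref{appl1}; your route via the invariant $f$ is shorter once Theorem~\ref{appl1} is in hand. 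Note, however, that no uncountability is needed in this step.

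The genuine gap is the non-affine case. The negation of (ii) gives you a nontrivial connected algebraic subgroup $H$, not necessarily affine. If $\AX$ happens to contain no nontrivial connected affine algebraic subgroups but does contain a nontrivial connected algebraic subgroup, then by the Barsotti--Rosenlicht structure theorem that subgroup is a nontrivial abelian variety $A$. Your plan to ``manufacture enough connected affine algebraic subgroups from $H$'' produces nothing here: $A$ has no nontrivial connected affine subgroups, and conjugating $A$ yields more abelian varieties, never affine groups. Theorem~\ref{appl1} is silent about $A$ (its proof relies on rationality of affine groups, which fails for abelian varieties), so your $f$ has no reason to be $A$-invariant, and the whole mechanism collapses. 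The paper handles this case by a completely different argument: the open orbit produced by conjugates of $A$ is the image of a complete variety, so $X$ is complete; then $\AX^0$ is algebraic with countable component group, $\AX^0$ is itself an abelian variety acting simply transitively, and the stabilizer $\AX_x$ of a point is countable while its orbit on $X_2\setminus\{x\}$ must be all of $X_2\setminus\{x\}$ --- this is where uncountability of $k$ is actually invoked, to get the final contradiction. You will need to supply this second half.
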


The proof is given in Section 11.

In fact, I have
no examples of $X$ such that $\AX$ is generically $2$-tran\-sitive and
contains no nontrivial algebraic subgroups.

\begin{corollary}\label{proje}  Let
$X$ be an irreducible complete variety.\;If  $\AX$ is gene\-ri\-cally $2$-transitive, then $X$ is unirational.
\end{corollary}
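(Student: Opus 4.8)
The plan is to deduce Corollary~\ref{proje} from Theorem~\ref{appl2} by showing that, for a complete $X$, the second alternative of that theorem cannot occur as soon as $\dim X\geqslant 1$; the case $\dim X=0$ is trivial, since then $X$ is a point and hence unirational. So assume $\dim X\geqslant 1$. First I would invoke the representability of the automorphism functor of a complete variety (Matsumura--Oort): $\AX$ is the group of $k$-points of a group scheme $\underline{{\rm Aut}}(X)$ locally of finite type over $k$, whose identity component ${\rm Aut}^0(X)$ is a connected algebraic group and so, in the terminology of Section~2, a connected algebraic subgroup of $\AX$. Thus alternative {\rm(ii)} of Theorem~\ref{appl2} holds precisely when the reduced identity component of $\underline{{\rm Aut}}(X)$ is trivial, and it suffices to rule this out.

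The key step is a counting argument based on the standing assumption that $k$ is uncountable. Generic $2$-transitivity of $\AX$ implies generic transitivity: for generic $a,b$ one chooses a generic auxiliary point $c$ so that $(a,c)$ and $(b,c)$ are off-diagonal pairs in the relevant dense open locus, and an element carrying $(a,c)$ to $(b,c)$ then carries $a$ to $b$; hence a single $\AX$-orbit $O$ contains a dense open subset $U$ of $X$. Since $\dim X\geqslant 1$ and $k$ is uncountable, $U$, and therefore $O$, is uncountable; as $O$ is the image of the orbit map $g\mapsto g(x_0)$, the group $\AX$ is itself uncountable. On the other hand, $\underline{{\rm Aut}}(X)$ has only countably many connected components, each of finite type, so if its reduced identity component were trivial then every component would contribute a single $k$-point and $\AX$ would be countable, a contradiction. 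Hence some component, and with it ${\rm Aut}^0(X)$, is positive-dimensional.

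Consequently ${\rm Aut}^0(X)$ supplies a nontrivial connected algebraic subgroup of $\AX$, alternative {\rm(ii)} of Theorem~\ref{appl2} fails, and alternative {\rm(i)} delivers the unirationality of $X$. The one input that requires care is the countability of $\pi_0\bigl(\underline{{\rm Aut}}(X)\bigr)$, and this is where I expect the main obstacle to lie. For projective $X$ it is transparent: embedding automorphisms by their graphs into the Hilbert scheme of $X\times X$, one notes that connected components of that Hilbert scheme are indexed by the countable set of Hilbert polynomials, while the locus with a fixed Hilbert polynomial is of finite type and so has finitely many components; thus $\pi_0\bigl(\underline{{\rm Aut}}(X)\bigr)$ is countable. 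For a general complete $X$ I would recover the same countability from the boundedness built into the Matsumura--Oort construction. Granting this, the remainder is exactly the cardinality-theoretic contradiction above together with the black-box application of Theorem~\ref{appl2}.
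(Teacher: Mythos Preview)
Your proposal is correct and follows essentially the same route as the paper. Both arguments reduce to: completeness of $X$ makes $\AX^0$ a connected algebraic group with $\AX/\AX^0$ at most countable (the paper cites \cite{MO67} for precisely this, which resolves the ``main obstacle'' you flag), and then generic $2$-transitivity forces $\AX$ to be uncountable over an uncountable $k$, so $\AX^0$ cannot be trivial and Theorem~\ref{appl2}(i) applies. The only cosmetic difference is how uncountability is extracted: you pass through generic $1$-transitivity and the orbit map, whereas the paper reuses the stabilizer argument from the end of the proof of Theorem~\ref{appl2}; both are immediate.
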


The proof is given in Section 12.

As
applications of Theorem \ref{appl2}, we obtain the following Corollaries \ref{ColMos}  and\
\ref{reptyp}:

\begin{corollary}\label{ColMos} Every Calogero--Moser space
\begin{equation*}
{\mathcal C}_n:=\{(A, B)\in {\rm Mat}_n({\mathbf C})^2\mid {\rm rk}([A, B]+I_n)=1\}/\!\!/{\rm PGL}_n({\mathbf C})
\end{equation*}
{\lb}see {\rm \cite{{Wil98}}}\rb\;is an irreducible unirational variety.
\end{corollary}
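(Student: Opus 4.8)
The plan is to verify that the Calogero--Moser space $\mathcal C_n$ satisfies the hypothesis of Theorem~\ref{appl2}, and then to rule out alternative (ii) so that conclusion (i)---unirationality---must hold. The first task is therefore to produce a large supply of automorphisms of $\mathcal C_n$. The key structural input is Wilson's description of $\mathcal C_n$: it carries a natural action of (a form of) the Cremona-type symmetry coming from the two matrix coordinates, and more importantly the additive group $\mathbf G_a$ acts on $\mathcal C_n$ by the two commuting one-parameter flows
\begin{equation*}
(A,B)\mapsto (A+tB,\,B),\qquad (A,B)\mapsto (A,\,B+sA),
\end{equation*}
which descend through the $\mathrm{PGL}_n$-quotient because they commute with conjugation and preserve the rank condition $\mathrm{rk}([A,B]+I_n)=1$. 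These give nontrivial connected algebraic (indeed unipotent) one-dimensional subgroups of $\mathrm{Aut}(\mathcal C_n)$, so alternative (ii) of Theorem~\ref{appl2} is immediately excluded.

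The main work is to establish that $\mathrm{Aut}(\mathcal C_n)$ is generically $2$-transitive on $\mathcal C_n$. Here I would again lean on Wilson's analysis: $\mathcal C_n$ is an irreducible smooth affine variety of dimension $2n$, and the combined action of the flows above together with the $\mathrm{GL}_n$-scaling and the ``Fourier'' involution $(A,B)\mapsto(B,-A)$ generates a group acting with very few orbits. The cleanest route is to invoke (or reprove) the known fact that $\mathrm{Aut}(\mathcal C_n)$ acts transitively---in fact with a high degree of multiple transitivity---on the smooth locus; generic $2$-transitivity is considerably weaker and should follow from showing that a single generic pair of points can be moved to any other generic pair by composing the available flows. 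Concretely, I would first check that $\mathrm{Aut}(\mathcal C_n)$ is transitive on a dense open subset (using that the flows fill out an orbit of dimension $2n=\dim\mathcal C_n$, so by Theorem~\ref{main3} and Corollary~\ref{c2} there is a dense open orbit), and then upgrade transitivity on $\mathcal C_n$ to generic $2$-transitivity by the standard argument that the stabilizer of a generic point still acts with a dense orbit on a dense open subset of $\mathcal C_n$.

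With generic $2$-transitivity of $\mathrm{Aut}(\mathcal C_n)$ in hand and alternative (ii) ruled out, Theorem~\ref{appl2} forces conclusion (i): $\mathcal C_n$ is unirational. Irreducibility of $\mathcal C_n$ is part of Wilson's description (it is the standard fact that the Calogero--Moser space is a smooth irreducible symplectic variety), so the two asserted properties both follow. \emph{The hard part} will be establishing generic $2$-transitivity rigorously: verifying that the explicitly available automorphisms (the two shear flows, the scalings, and the involution) genuinely suffice to move a generic ordered pair of points to an arbitrary generic ordered pair, rather than merely establishing transitivity on single points. I expect this to reduce to a concrete linear-algebra computation with the matrix pairs $(A,B)$, controlling the action of the flows on pairs while staying inside the rank-one locus; once transitivity on pairs is reduced to such a computation, the remainder of the argument is a direct appeal to Theorem~\ref{appl2}.
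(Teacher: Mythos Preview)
Your strategy---check the hypotheses of Theorem~\ref{appl2} and exclude alternative (ii)---is exactly the paper's. The only substantive difference is in how $2$-transitivity is obtained: the paper does not attempt to establish it from the explicit flows you list, but simply cites \cite[Thm.~1]{BEE14}, which proves that $\mathrm{Aut}(\mathcal C_n)$ acts $2$-transitively (not merely generically) on $\mathcal C_n$. For excluding alternative (ii) the paper uses the $\mathbf G_m$-action $t\cdot(A,B)=(t^{-1}A,tB)$ rather than your $\mathbf G_a$-shears; either choice works, and your verification that the shears preserve the commutator and descend through the quotient is correct. So your proposal is sound, and what you flag as ``the hard part'' is precisely the piece the paper outsources to \cite{BEE14}; if you intend to supply that argument yourself, be aware that the flows you name are essentially the ingredients used there, but assembling them into a proof of $2$-transitivity is a real piece of work, not a routine computation.
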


The proof
is given in Section 13; it is based on Theorem \ref{appl2} and multiple transitivity of ${\rm Aut}({\mathcal C}_n)$.\;Using other special properties of ${\mathcal C}_n$, one
can prove that  ${\mathcal C}_n$ is actually rational; see Remark \ref{ration} in Section 13.


\begin{corollary}\label{reptyp} 
For $\,{\rm char}\,k=0$ and $m\geqslant 3$, every set
$Q_{m,n}(\tau)$
 of all points of $\;{\rm Mat}_n(k)^m/\!\!/{\rm PGL}_n(k)$
of a
fixed
representation type
$\tau$
{\lb}see {\rm\cite{Rei93}}\rb\;is an irreducible unirational variety.
\end{corollary}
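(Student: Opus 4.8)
The goal is to prove that the representation-type variety $Q_{m,n}(\tau)$ is irreducible and unirational for $\mathrm{char}\,k=0$ and $m\geqslant 3$. The plan is to invoke Corollary \ref{proje} (or Theorem \ref{appl2}) by verifying that $Q_{m,n}(\tau)$ carries a sufficiently transitive automorphism group. First I would recall the setup from \cite{Rei93}: a representation type $\tau$ records the decomposition of a semisimple $n$-dimensional representation of the free associative algebra $k\langle x_1,\ldots,x_m\rangle$ into isotypic components, and the locus $Q_{m,n}(\tau)\subset \mathrm{Mat}_n(k)^m/\!\!/\mathrm{PGL}_n(k)$ of points of type $\tau$ is known to be irreducible and smooth of computable dimension. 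So the irreducibility half is already available from the structure theory; the substance is unirationality.

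For unirationality I would exhibit a large group of automorphisms of $Q_{m,n}(\tau)$ acting with enough transitivity to apply Theorem \ref{appl2}. The natural source of automorphisms is the action of the general linear group $\mathrm{GL}_m(k)$ (and more generally $\mathrm{GL}_m$ together with translations and the free-algebra symmetries) on the $m$-tuples $(A_1,\ldots,A_m)$ by linear substitution of the matrix coordinates, which commutes with the simultaneous conjugation by $\mathrm{PGL}_n$ and hence descends to $Q_{m,n}(\tau)$. Crucially, when $m\geqslant 3$ one has enough independent directions to move points around; I would show, following the multiple-transitivity results established in the literature (e.g.\ the $2$-transitivity statements for such moduli of matrix tuples, as in \cite{Rei93} and the references assembled for Theorem \ref{appl2}), that $\mathrm{Aut}(Q_{m,n}(\tau))$ is generically $2$-transitive. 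This places us squarely in the hypothesis of Theorem \ref{appl2}.

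With generic $2$-transitivity in hand, Theorem \ref{appl2} gives the dichotomy: either $Q_{m,n}(\tau)$ is unirational, or $\mathrm{Aut}(Q_{m,n}(\tau))$ contains no nontrivial connected algebraic subgroup. To close the argument I would rule out the second alternative by noting that the descended $\mathrm{GL}_m(k)$-action (or at least its nontrivial connected subgroups, such as the scaling torus or a $\mathbf{G}_a$ arising from adding a scalar multiple of one generator to another) is a genuine nontrivial connected algebraic subgroup of $\mathrm{Aut}(Q_{m,n}(\tau))$ — here the hypothesis $m\geqslant 3$ guarantees that these substitutions do not collapse and indeed act nontrivially on the type-$\tau$ locus. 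Hence alternative (ii) fails and we conclude that $Q_{m,n}(\tau)$ is unirational, completing the proof.

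The main obstacle, which I would treat carefully, is establishing the generic $2$-transitivity of $\mathrm{Aut}(Q_{m,n}(\tau))$ on the type-$\tau$ locus: unlike the full space $\mathrm{Mat}_n(k)^m/\!\!/\mathrm{PGL}_n$, the subvariety $Q_{m,n}(\tau)$ is cut out by the condition of a fixed decomposition type, so one must check that the automorphisms used to witness transitivity genuinely preserve $\tau$ and still act with dense orbits on the appropriate product. Verifying that the multiple-transitivity arguments from the cited works restrict correctly to the fixed-type stratum, and that $m\geqslant 3$ is exactly the threshold making the relevant substitution group large enough, is where the real work lies; the remaining steps are formal applications of Theorem \ref{appl2} and of the known irreducibility of $Q_{m,n}(\tau)$.
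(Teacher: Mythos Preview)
Your approach is essentially the same as the paper's: cite irreducibility from the literature (the paper uses \cite[Thm.~II.1.1]{LBP87}), cite $2$-transitivity of $\mathrm{Aut}(Q_{m,n}(\tau))$ from \cite[Thm.~1.4]{Rei93}, exhibit a nontrivial connected algebraic subgroup (the paper uses the scalar $\mathrm{GL}_1$-action, shown in \cite[Prop.~4.1]{Rei93} to preserve representation type), and apply Theorem~\ref{appl2}. Two small corrections: Corollary~\ref{proje} is not available here since $Q_{m,n}(\tau)$ is not complete, and the $2$-transitivity you flag as the main obstacle is already proved directly on the type-$\tau$ stratum in \cite[Thm.~1.4]{Rei93}, so no additional verification is required.
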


The proof
is given in Section 14;  it is based on Theorem \ref{appl2} and the multiple transitivity of ${\rm Aut}(Q_{m,n}(\tau))$.

\begin{question} Is $Q_{m,n}(\tau)$ rational?
\end{question}

Other applications are discussed in Section 10.


\subsection*{4.\;Proof of Lemma \ref{con-equiv}}
(i)$\Rightarrow$(ii): For every element $g\in G$, fix a unital algebraic family $\fat$ in $G$ such that $g\in \fT$; the connectedness of $G$ implies that such a family exists.\;Then
$G$ is generated, as an abstract group, by $\bigcup\fT$ with the union taken over all the fixed families.

(ii)$\Rightarrow$(i): Since the inverse of any family in $G$ is also a family in $G$, we may (and shall) assume that if a family belongs to $\mathcal I$, then its inverse belongs to $\mathcal I$ too.\;Then for every element $g\in G$,
there exists a finite sequence of families $\fat,\ldots, \{\psi_s\}_{s\in S}$ from $\mathcal I$ such that
$
g=\varphi_{t_0}\circ\cdots\circ\psi_{s_0}
$
for some $t_0\in T,\ldots, s_0\in S$.\;Hence $g$ is contained in the product of families
$\fat,\ldots, \{\psi_s\}_{s\in S}$ defined by  \eqref{prod}.\;Therefore, $G$ is connected.
$\quad  \square$

\subsection*{5.\;Algebraic families}
This section contains several general facts
utilized in the proofs of Theorems \ref{main1}, \ref{main3}, and \ref{main2}.
\begin{lemma}\label{algfam}Let $X$ be an irreducible variety, let $G$ be a connected subgroup of  ${\rm Aut}(X)$, and let $\,Y$ be a
$G$-invariant  locally closed subvariety of $X$.
\begin{enumerate}[\hskip 2.2mm \rm(i)]
\item Every product of unital families in $\AX$ contains each of them.

\item If a family
$\fat$ in $G$ is exhaustive for the natural action of $G$ on $Y$, then every family
$\fas$ in $G$ such that $\fT\subseteq \fS$ is
also exhaustive for this action.

\item If $G$ is generated by a collection $\mathcal I$ of unital algebraic families, then $G$ is the union of
all families derived from $\mathcal I$.
\item
$G|^{\ }_{Y}:=\{g|^{\ }_{Y}\mid g\in G\}$ is a connected subgroup of ${\rm Aut}(Y)$.

\item If
$\mathcal F$ is a finite set of algebraic families in $G$, then $G$
 contains a unital algebraic family $\fat$
 such that $\fT\supseteq\fS$ for every $\fas$ in $\mathcal F$.
 \end{enumerate}
\end{lemma}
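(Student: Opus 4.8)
The five parts are of varying difficulty, and I would dispatch (i)–(iv) quickly, reserving the real effort for (v).

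For part (i), given a product family $\{\varphi_t\circ\cdots\circ\psi_s\}$ of unital families, I would single out one factor, say the family $\{\chi_u\}_{u\in U}$. Because every factor is unital, each of the \emph{other} factors contains $\mathrm{id}_X$ in its image; substituting these identity parameters into the defining expression \eqref{prod} leaves exactly $\chi_u$. As $u$ ranges over $U$, this shows the image of the product contains the image of that factor, which is precisely what (i) asserts. For part (ii), if $\varphi^{\ }_T\subseteq\psi^{\ }_S$ then for every $y\in Y$ we have $G(y)=\varphi^{\ }_T(y)\subseteq\psi^{\ }_S(y)\subseteq G(y)$, the last inclusion because $\{\psi_s\}$ is a family in $G$; hence equality holds throughout and $\{\psi_s\}$ is exhaustive. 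Part (iii) is essentially a restatement of the argument already given in the proof of Lemma \ref{con-equiv}: closing $\mathcal I$ under inverses, every $g\in G$ is a finite composite $\varphi_{t_0}\circ\cdots\circ\psi_{s_0}$ of elements drawn from families in $\mathcal I$, hence lies in the image of the corresponding product family, which is derived from $\mathcal I$; conversely every derived family lies in $G$ since $G$ is closed under products and inverses. Part (iv) follows because restriction to the $G$-invariant subvariety $Y$ carries each unital algebraic family $\{\varphi_t\}$ generating $G$ to the unital family $\{\varphi_t|^{\ }_Y\}$, which is algebraic since $\widetilde\varphi$ restricts to a morphism $T\times Y\to Y$; by Lemma \ref{con-equiv} the image group $G|^{\ }_Y$ is then a connected subgroup of $\mathrm{Aut}(Y)$.

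The substance of the lemma is part (v). Here I am given a \emph{finite} set $\mathcal F$ of algebraic families in $G$, and I must produce a single unital algebraic family in $G$ whose image contains the image of each family in $\mathcal F$ simultaneously. The natural candidate is built from the product construction \eqref{prod}, but two obstacles must be overcome. First, the members of $\mathcal F$ need not be unital, so I cannot directly invoke (i) to guarantee that a product of them contains each factor. The fix is to pass, for each $\{\psi^{(j)}_s\}_{s\in S_j}\in\mathcal F$, to an enlarged \emph{unital} algebraic family in $G$ containing it: compose with the inverse of a fixed element of the family, or more simply adjoin the identity by forming a product with a unital family of $G$ built from the generating collection $\mathcal I$ furnished by Lemma \ref{con-equiv}. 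Second, having made each family unital, I form their product family $\Phi:=\{\varphi^{(1)}_{t_1}\circ\cdots\circ\varphi^{(r)}_{t_r}\}$ over $T_1\times\cdots\times T_r$; this is unital (a product of unital families is unital) and algebraic (a product of algebraic families is algebraic, as recorded in Section 2), and it lies in $G$ since $G$ is closed under products. By part (i), $\Phi$ contains each unitalized factor, hence contains each original $\psi^{(j)}_{S_j}$, giving $\varphi^{\ }_T\supseteq\psi^{\ }_S$ for every $\{\psi_s\}\in\mathcal F$ as required.

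I expect the main obstacle to be the first of these two points — arranging unitality without leaving $G$ and without destroying algebraicity — since the families in $\mathcal F$ are given to us as arbitrary algebraic families in $G$ rather than as unital ones. The cleanest route is to use Lemma \ref{con-equiv}(ii)$\Rightarrow$ the existence of unital algebraic generators of $G$, fix for each $j$ an element $g_j$ in the image of $\{\psi^{(j)}_s\}$, and replace that family by its product with $\{(g_j)^{-1}\}$ (a constant, hence algebraic and unital after composition) or by a product with a unital algebraic family through $g_j^{-1}$; the resulting family is unital, remains in $G$, remains algebraic, and still contains $\psi^{(j)}_{S_j}$. Once all families are unital, the product argument and part (i) finish the proof.
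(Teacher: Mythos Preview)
Your approach for all five parts matches the paper's own proof. In particular, for (v) the paper does exactly what your ``Option B'' describes: reduce via (i) to a single family $\{\psi_s\}_{s\in S}$, pick $g\in\psi_S$, use connectedness of $G$ to find a unital algebraic family $\{\mu_r\}_{r\in R}$ in $G$ with $g^{-1}\in\mu_R$, and take the product $\{\psi_s\circ\mu_r\}$, which is unital (via $g\circ g^{-1}=\mathrm{id}_X$) and contains $\psi_S$ (via $\mu_{r_0}=\mathrm{id}_X$).

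One genuine slip to correct: your ``Option A'' in the last paragraph---taking the product with the \emph{constant} family $\{g_j^{-1}\}$---does produce a unital family $\{\psi_s\circ g_j^{-1}\}_{s\in S_j}$, but it does \emph{not} in general contain $\psi_{S_j}$, since that would require $\psi_s\circ g_j\in\psi_{S_j}$ for every $s$, which is not given. So your claim that the resulting family ``still contains $\psi^{(j)}_{S_j}$'' is false for Option A. Only Option B (product with a unital algebraic family through $g_j^{-1}$) works, and that is precisely the paper's argument; drop Option A.
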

\begin{proof} (i) and (ii): This is immediate from the definitions.

(iii): The proof is similar to that of implication (ii)$\Rightarrow$(i) of Lemma \ref{con-equiv}.

(iv): If $\fat$ is a unital algebraic family in $G$ containing an element $g\in G$, then $\{\varphi^{\ }_t|^{\ }_Y\}^{\ }_{t\in T}$ is a unital algebraic family in $G|^{\ }_{Y}$ containing the element
$g|^{\ }_Y\in G|^{\ }_{Y}$. Whence the claim.

 (v): Due to (i), the proof is reduced to the case where $\mathcal F$ consists of a single family $\fas$.\;In this case, take  an element $g\in \fS$.\;Since $G$ is connected, it contains
a unital algebraic family $\{\mu_r\}_{r\in R}$
such that $g^{-1}\in \mu^{\ }_R$.\;The product of $\fas$ and $\{\mu_r\}_{r\in R}$ is then
the sought-for family $\fat$.
 \end{proof}

\begin{lemma}\label{invcomp} Let $X$ be an irreducible variety and
let $Y$ be a locally closed subvariety of $\,X$.\;Let $Y_1,\ldots, Y_n$ be all the irreducible components of $\,Y$.\;If
$\fat$ is a unital algebraic family in
${\rm Aut}(X)$ such that $Y$ is $\varphi_t$-invariant for every $t\in T$, then
every $Y_i$ is $\varphi_t$--invariant for every $t\in T$.
\end{lemma}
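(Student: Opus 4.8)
The plan is to prove that each irreducible component $Y_i$ is $\varphi_t$-invariant for every $t\in T$, using the fact that the family is \emph{unital} (so ${\rm id}_X=\varphi_{t_0}$ for some $t_0\in T$) and \emph{algebraic} (so $\widetilde\varphi\colon T\times X\to X$ is a morphism). The key point is that an automorphism permutes the irreducible components of an invariant subvariety, and a continuity/irreducibility argument forces this permutation to be trivial once the parameter space $T$ is irreducible and contains the point giving the identity.

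First I would fix an index $i$ and set up the geometry. Since $\varphi_t$ is an automorphism of $X$ carrying $Y$ onto $Y$, it permutes the finite set $\{Y_1,\ldots,Y_n\}$ of irreducible components; write $\varphi_t(Y_i)=Y_{\sigma_t(i)}$ for a permutation $\sigma_t$ of $\{1,\ldots,n\}$ depending on $t$. For the unital parameter $t_0$ with $\varphi_{t_0}={\rm id}_X$ we have $\sigma_{t_0}={\rm id}$. The goal is to show $\sigma_t(i)=i$ for all $t$, i.e.\ that the map $t\mapsto\sigma_t$ is constant. My strategy is to realize, for each fixed pair $(i,j)$, the set $T_{ij}:=\{t\in T\mid \varphi_t(Y_i)\subseteq Y_j\}$ as a closed (or at least constructible) subset of $T$, and then to use irreducibility of $T$ to conclude that $T=T_{ii}$.

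Concretely I would argue as follows. Fix $i$ and consider the restriction of $\widetilde\varphi$ to $T\times \overline{Y_i}$ (closure in $X$), which is again a morphism; since $\widetilde\varphi(t,x)=\varphi_t(x)$ and each $\varphi_t$ is an automorphism, the image $\widetilde\varphi(T\times \overline{Y_i})$ lands in $Y$, and in fact $\widetilde\varphi(\{t\}\times \overline{Y_i})=\overline{Y_{\sigma_t(i)}}$ is a single irreducible component closure. Now $T\times \overline{Y_i}$ is irreducible (product of irreducibles), so its continuous image under the morphism $\widetilde\varphi$ is irreducible and its closure $W:=\overline{\widetilde\varphi(T\times \overline{Y_i})}$ is an irreducible closed subset of $X$ contained in $Y$. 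On the other hand $W$ contains $\widetilde\varphi(\{t_0\}\times \overline{Y_i})=\overline{Y_i}$, and it contains every $\overline{Y_{\sigma_t(i)}}$ as $t$ ranges over $T$. Because $W$ is irreducible and each $\overline{Y_{\sigma_t(i)}}$ is an irreducible component of $Y$ (hence a maximal irreducible closed subset of $Y$ of the same dimension that cannot be properly contained in another component), the only way $W$ can contain all of them while itself being irreducible and contained in $Y$ is for all these component closures to coincide with $W=\overline{Y_i}$. Thus $\overline{Y_{\sigma_t(i)}}=\overline{Y_i}$, forcing $\sigma_t(i)=i$ for every $t\in T$, and therefore $\varphi_t(Y_i)=Y_i$.

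The main obstacle I anticipate is the justification that the irreducible set $W$ cannot meet more than one component ``essentially'' — i.e.\ the step where I pass from ``$W$ is irreducible and contains several distinct components'' to ``those components coincide''. The clean way to handle this is to note that $Y_{\sigma_t(i)}$ is dense in $W$ (it is the image of the dense open orbit direction) and that an irreducible variety has a unique generic point, so its closure equals the closure of any one of its dense irreducible pieces; hence all the $\overline{Y_{\sigma_t(i)}}$ share the same generic point and are equal. A subtlety to keep in mind is that $Y$ need not be closed (it is only locally closed), so one should work consistently with closures in $X$ and invoke that the irreducible components of a locally closed set $Y$ are the traces on $Y$ of the irreducible components of $\overline{Y}$; but since each $\varphi_t$ is a global automorphism of $X$, invariance of $\overline{Y}$ and of its components transfers back to $Y$ without difficulty.
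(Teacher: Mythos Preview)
Your argument is correct, and interestingly you outline the paper's approach but then execute a different one. The paper carries out precisely the $T_{ij}$ strategy you sketch: it sets $T_{ij}=\{t\in T\mid \varphi_t(Y_i)=Y_j\}$, writes $T_{ij}=\bigcap_{x\in Y_i}\widetilde{\varphi}_x^{-1}(Y_j)$ where $\widetilde{\varphi}_x(t)=\varphi_t(x)$, observes that each $T_{ij}$ is closed (since $Y_j$ is closed in $Y$ and $\widetilde{\varphi}_x$ maps $T$ into $Y$), notes that $T=\bigsqcup_j T_{ij}$ with $T_{ii}\neq\varnothing$ by unitality, and concludes $T=T_{ii}$ from the irreducibility of $T$. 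Your concrete execution instead exploits irreducibility on the \emph{target} side: $T\times Y_i$ is irreducible, hence so is the closure $W$ of its image, and an irreducible closed subset of $Y$ lies in a single component, which must then be $Y_i$. Both routes hinge on the irreducibility of $T$; the paper's is a line shorter and sidesteps the $Y$ versus $\overline{Y}$ bookkeeping, while yours is a pleasant geometric reformulation.

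One small cleanup: when you pass to $T\times\overline{Y_i}$ you assert the image lands in $Y$, but in fact it only lands in $\overline{Y}$; you flag this yourself at the end. The simplest fix is not to take closures at all on the source: work with $T\times Y_i$, whose image under $\widetilde{\varphi}$ genuinely lies in $Y$, take the closure $W$ inside $Y$, and use that $W$ is irreducible and closed in $Y$, hence contained in a single component $Y_m$; since $W\supseteq \varphi_t(Y_i)=Y_{\sigma_t(i)}$ for every $t$ (in particular $W\supseteq Y_i$), maximality forces $Y_{\sigma_t(i)}=Y_m=Y_i$. This avoids any appeal to the correspondence between components of $Y$ and of $\overline{Y}$.
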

\begin{proof}
For every point $t\in T$, since $\ft\in\AX$ and $Y$ is $\ft$-invariant, $\ft$ permutes $Y_1,\ldots, Y_n$.\;Put
$$
T_{ij}:=\{t\in T\mid \ft(Y_i)=Y_j\}.
$$

For
every point $x\in Y_i$ consider the morphism
\begin{equation}\label{psix}
\widetilde{\varphi}^{\ }_x\colon T\to X,\quad t\mapsto {\widetilde \varphi}(t, x)=\varphi^{\ }_t(x)
\end{equation}
(see \eqref{wtilde}).\;Then, for every\;$Y_j$,
\begin{equation}\label{permu}
T_{ij}=\textstyle \bigcap_{x\in Y_i} \widetilde{\varphi}^{-1 }_x(Y_j).
\end{equation}
Since $Y_j$ is closed, \eqref{permu} implies the closedness of $T_{ij}$ in $T$.\;Unitality of $\ft$ implies
$T_{ii}\neq \varnothing$.\;From $T=\bigsqcup_{j=1}^{n}T_{ij}$ and the irreducibility of $T$ we then infer that $T=T_{ii}$ for every $i$,\;i.e., $Y_i$ is $\ft$-invariant for every $i$ and $t$.
 \end{proof}

Lemma \ref{invcomp} and the definition of connected subgroups of ${\rm Aut}(X)$ yield

\begin{corollary}\label{invccc}
 Let $X$, $Y$, and $Y_1,\ldots, Y_n$ be
the same as in Lemma {\rm\ref{invcomp}}.\;If
$Y$ is $G$-invariant for a connected subgroup $G$ of ${\rm Aut}(X)$, then every $Y_i$ is $G$-invariant.
\end{corollary}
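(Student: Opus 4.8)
The plan is to deduce the statement from Lemma \ref{invcomp} by using connectedness of $G$ to produce, for an arbitrary group element, a unital algebraic family that contains it and to which the lemma applies. Since $G$-invariance of each $Y_i$ means invariance under every element of $G$, it suffices to show that each $Y_i$ is $g$-invariant for an arbitrary fixed $g\in G$.

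First I would invoke the definition of a connected subgroup of $\AX$: because $G$ is connected and $g\in G$, there exists a unital algebraic family $\fat$ in $G$ with $g\in\fT$. As this family is contained in $G$, each $\varphi_t$ lies in $G$, and the hypothesis that $Y$ is $G$-invariant therefore gives that $Y$ is $\varphi_t$-invariant for every $t\in T$. These are precisely the hypotheses of Lemma \ref{invcomp}.

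Applying Lemma \ref{invcomp} to $\fat$ then yields that every irreducible component $Y_i$ is $\varphi_t$-invariant for all $t\in T$. Choosing $t_0\in T$ with $\varphi_{t_0}=g$, which is possible since $g\in\fT$, we obtain that each $Y_i$ is $g$-invariant. As $g$ was arbitrary in $G$, each $Y_i$ is $G$-invariant, as required.

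I do not expect a genuine obstacle in this argument: the substantive work is already carried out in Lemma \ref{invcomp}, and the corollary adds only the passage from a single family to the whole group by means of connectedness. The one point meriting brief attention is the elementary observation that a family lying in $G$ has all its members in $G$, so that $G$-invariance of $Y$ transfers to $\varphi_t$-invariance for every $t$ and the lemma indeed becomes applicable.
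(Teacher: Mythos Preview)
Your argument is correct and is exactly the approach the paper indicates: the corollary is stated as an immediate consequence of Lemma~\ref{invcomp} together with the definition of a connected subgroup, and you have spelled out precisely that deduction.
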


\begin{lemma}\label{orbit} Let $X$ be an irreducible variety and let $G$ be a connected subgroup of  ${\rm Aut}(X)$.\;If $\fat$ is an algebraic family in $G$, and $x$ is a point of $X$, then
\begin{enumerate}[\hskip 2.2mm \rm (i)]
\item $G(x)$ is an irreducible locally closed nonsingular subvariety of $X${\rm;}
\item $\fT(x)$ is a constructible subset of $\,G(x)$.
\end{enumerate}
\end{lemma}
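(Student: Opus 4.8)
The plan is to reduce everything to properties of a single exhaustive algebraic family and then invoke standard facts about images of morphisms together with the homogeneity coming from the group structure. First I would apply Theorem \ref{main1} to the locally closed $G$-invariant subvariety $Y:=G(x)$ (or, more carefully, to a suitable $G$-invariant locally closed subvariety whose existence is guaranteed by the theorem) to obtain an algebraic family $\fat$ in $G$ that is exhaustive for the action of $G$ on $G(x)$, so that $G(x)=\fT(x)$. Replacing this family by a larger one via Lemma \ref{algfam}(v), and by Lemma \ref{algfam}(ii) preserving exhaustiveness, I may assume $\fat$ is a \emph{unital} algebraic family; unitality guarantees $x=\mathrm{id}_X(x)\in\fT(x)$. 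Then $\fT(x)$ is precisely the image of the morphism $\widetilde\varphi^{\ }_x\colon T\to X$, $t\mapsto\varphi_t(x)$ of \eqref{psix}, so Chevalley's theorem shows $\fT(x)$ is a constructible subset of $X$, and since $T$ is irreducible its image is irreducible with irreducible closure.

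For part (i), I would argue that $G(x)$ is locally closed. Let $C:=\fT(x)=G(x)$, a constructible set, and let $\overline C$ be its closure. By a standard fact a constructible set contains a dense open subset $\Omega$ of its closure $\overline C$. The key point is homogeneity: because $C=G(x)$ is a single $G$-orbit and $G$ acts by automorphisms of $X$ preserving $C$ and $\overline C$, the translates $\{g(\Omega)\mid g\in G\}$ are all open in $\overline C$ and cover $C$ (every point of $C$ is $g(x)$ for some $g\in G$, and $x\in\Omega$ after possibly shrinking, using that $C$ is dense in $\overline C$ and invariant). Hence $C$ is open in $\overline C$, i.e. locally closed. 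Irreducibility of $C$ follows since it is the image of the irreducible $T$ (or, alternatively, since $\overline C$ is irreducible and $C$ is open and dense in it). Finally, nonsingularity follows from homogeneity: the constructible set $C$ contains a nonsingular point (the nonsingular locus of the variety $C$ is a dense open subset), and the transitive action of $G$ by automorphisms of $X$ carries nonsingular points to nonsingular points, so every point of $C$ is nonsingular.

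For part (ii), once $G(x)$ has been shown to be a locally closed subvariety, the constructibility of $\fT(x)$ inside $G(x)$ is immediate: $\fT(x)$ is the image of the morphism $\widetilde\varphi^{\ }_x$ and hence constructible in $X$ by Chevalley's theorem, and its intersection with the locally closed set $G(x)$ (which in fact equals $\fT(x)$ when $\fat$ is exhaustive, but in general $\fT(x)\subseteq G(x)$) is again constructible.

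I expect the main obstacle to be the local closedness and homogeneity argument in part (i): one must be careful that the exhaustive family produced by Theorem \ref{main1} has image exactly the orbit and that the orbit is genuinely a single $G$-orbit, so that the translation trick applies uniformly. The delicate step is arranging that a chosen nonempty open subset $\Omega\subseteq\overline{G(x)}$ lying inside the constructible orbit can be translated by elements of $G$ to cover all of $G(x)$; this uses both the transitivity of $G$ on its orbit and the fact that $G$ acts by automorphisms of the ambient $X$ (hence homeomorphisms of $\overline{G(x)}$), which is exactly the feature that survives the passage from finite-dimensional algebraic groups to arbitrary connected subgroups of $\AX$.
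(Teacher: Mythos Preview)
Your argument for part (ii) is correct and is exactly what the paper does: the definition of an algebraic family makes $\widetilde\varphi_x\colon T\to X$ a morphism, and Chevalley's theorem gives constructibility.

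For part (i), however, your plan is circular within the logical structure of this paper. You invoke Theorem~\ref{main1} to produce an exhaustive family for $G$ acting on $G(x)$ (or on $X$), but the proof of Theorem~\ref{main1} in Section~6 \emph{uses} Lemma~\ref{orbit}: at the end of the proof of Theorem~$1^{*}$ one reads ``By Lemma~\ref{orbit}, $\{(g(y),y)\mid g\in G\}$ is an irreducible locally closed subset of $\overline{\Gamma}_Y$.'' Likewise Lemma~\ref{generic}, which feeds into the inductive step of Theorem~\ref{main1}, appeals to Lemma~\ref{orbit}. So you cannot quote Theorem~\ref{main1} here. There is also a second, more local circularity: applying Theorem~\ref{main1} with $Y=G(x)$ already presupposes that $G(x)$ is locally closed, which is exactly the content of (i); your parenthetical hedge does not resolve this.

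The paper itself avoids the issue entirely by citing \cite[Lemma~2]{Ram64} for (i). The Ramanujam argument does not need an exhaustive family at all: one uses only the \emph{definition} of connectedness (each $g\in G$ lies in some unital algebraic family), takes products of such families to increase the dimension of $\overline{\fT(x)}$ until it stabilises, and then argues by maximality and homogeneity that this stable closure equals $\overline{G(x)}$, that $G(x)$ contains a dense open subset of it, and that translates of this open set by $G$ cover $G(x)$. Your homogeneity arguments for ``open in closure'' and for nonsingularity are exactly right once one has an open subset of $\overline{G(x)}$ inside $G(x)$; the fix is to obtain that open subset directly from unital families and their products, rather than via Theorem~\ref{main1}.
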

\begin{proof}
(i): This is proved in \cite[Lemma\;2]{Ram64}.

(ii): This follows from the definition of algebraic family and
Chevalley's theorem on the image of morphism.
 \end{proof}

\begin{corollary} Let $X$ be an irreducible variety and let $G$ be a connected subgroup of  ${\rm Aut}(X)$.\;Then $k(X)^G$ is algebraically closed in $k(X)$.
\end{corollary}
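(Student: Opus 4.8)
The plan is to show that if $f\in k(X)$ is algebraic over $K:=k(X)^G$, then $f\in K$; equivalently, $f^\sigma=f$ for every $\sigma\in G$. Fix such an $f$ and let $p(\lambda)=\lambda^n+a_{n-1}\lambda^{n-1}+\cdots+a_0$ be its minimal polynomial over $K$, so every $a_i\in k(X)^G$. For any $\sigma\in G$ the coefficients satisfy $a_i^\sigma=a_i$, hence $p(f^\sigma)=p(f)^\sigma=0$ and $f^\sigma\in k(X)$; therefore $f^\sigma$ lies in the finite set $R:=\{f_1,\ldots,f_m\}$ of roots of $p$ in $k(X)$. Thus each $\sigma\in G$ permutes $R$, and the whole problem reduces to proving that this permutation is trivial. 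By connectedness of $G$ (Lemma~\ref{con-equiv}) it suffices to fix a unital algebraic family $\{\varphi_t\}_{t\in T}$ in $G$ and show that $f^{\varphi_t}=f$ for every $t\in T$; applied to a family containing a prescribed $\sigma$, this yields $f^\sigma=f$.

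The core of the argument is to prove that the map $t\mapsto f^{\varphi_t}$, which takes values in the finite set $R$, is constant on the irreducible variety $T$. Because the inverse family $\{\varphi_t^{-1}\}_{t\in T}$ is again algebraic (Section~2), the assignment $\Phi(t,y):=\varphi_t^{-1}(y)$ is a morphism $T\times X\to X$, and $g:=f\circ\Phi$ is a rational function on the irreducible variety $T\times X$ whose restriction to $\{t\}\times X$ is exactly $f^{\varphi_t}$. For each root $f_i$ let $\widehat{f_i}$ denote the pullback of $f_i$ along the projection $T\times X\to X$, and set $T_i:=\{t\in T: f^{\varphi_t}=f_i\}$; these finitely many sets partition $T$ since the $f_i$ are distinct. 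I would show each $T_i$ is closed by analyzing the projection to $T$ of the zero locus of $g-\widehat{f_i}$: for $t$ in a dense open subset of $T$, the fibre of this zero locus over $t$ has dimension $\dim X$ precisely when $f^{\varphi_t}=f_i$ and smaller dimension otherwise, so upper semicontinuity of fibre dimension makes $T_i$ relatively closed. Irreducibility of $T$ then forces exactly one $T_i$ to contain a dense open subset of $T$.

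Once $t\mapsto f^{\varphi_t}$ is constant, say equal to $f_{i_0}$, on a dense open subset of $T$, the two rational functions $g$ and $\widehat{f_{i_0}}$ agree on a dense subset of $T\times X$ and hence coincide as rational functions. Restricting this global identity to $\{t\}\times X$ for an arbitrary $t$ is legitimate, because both $g(t,\cdot)=f\circ\varphi_t^{-1}$ and $\widehat{f_{i_0}}(t,\cdot)=f_{i_0}$ are honest rational functions defined on a dense open subset of $X$ (as $\varphi_t^{-1}$ is an automorphism); this gives $f^{\varphi_t}=f_{i_0}$ for every $t\in T$. In particular, evaluating at the point $t_0$ with $\varphi_{t_0}=\mathrm{id}_X$ supplied by unitality yields $f_{i_0}=f^{\varphi_{t_0}}=f$, whence $f^{\varphi_t}=f$ for all $t$, and the reduction above concludes the proof.

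The main obstacle is the closedness of the level sets $T_i$: unlike in Lemma~\ref{invcomp}, where the analogous sets were intersections of preimages of a genuinely closed subvariety under everywhere-defined morphisms, here the $f_i$ are only rational functions with indeterminacy, so the naive intersection need not be closed. Controlling this requires passing to $T\times X$ and invoking semicontinuity of fibre dimension (or, equivalently, clearing denominators and arguing with the graph of $g$), together with the bookkeeping of domains needed to justify restricting the global identity $g=\widehat{f_{i_0}}$ to individual slices $\{t\}\times X$.
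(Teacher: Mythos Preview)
Your argument is correct in outline and reaches the conclusion, but by a route genuinely different from the paper's. The paper works pointwise on $X$: for a point $a$ in the common domain of $f$ and the coefficients $f_i$, each $f_i$ is constant on the orbit $G(a)$, so $f|_{G(a)}$ has image contained in the finite root set of a polynomial in $k[t]$; then Lemma~\ref{orbit}(i), asserting that $G(a)$ is \emph{irreducible}, forces $f|_{G(a)}$ to be constant, whence $f\in k(X)^G$. You instead exploit irreducibility of the parameter variety $T$ of a unital algebraic family, never invoking Lemma~\ref{orbit}(i); this is more elementary in that it stays with the bare definition of ``connected subgroup'', at the cost of the bookkeeping with rational functions on $T\times X$.

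The step you flag as the ``main obstacle'' is in fact much easier than your semicontinuity sketch suggests (and that sketch is delicate as written: upper semicontinuity of fibre dimension over the \emph{target} needs properness, and clearing denominators can enlarge the zero locus over special $t$). Rather than projecting to $T$, work directly on $T\times X$. On the common domain $D\subseteq T\times X$ of $g$ and all the $\widehat{f_i}$\,---\,a dense open, hence irreducible, subset\,---\,the closed subsets $D_i:=\{g=\widehat{f_i}\}$ cover $D$: for each $(t,x)\in D$ one has $g(t,x)=f^{\varphi_t}(x)=f_j(x)$ where $j$ is the index with $f^{\varphi_t}=f_j$, so $(t,x)\in D_j$. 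Irreducibility of $D$ forces $D_{i_0}=D$ for some $i_0$, i.e.\ $g=\widehat{f_{i_0}}$ as rational functions on $T\times X$ (equivalently, $\prod_i(g-\widehat{f_i})=0$ in the field $k(T\times X)$, so one factor vanishes). Your final paragraph then applies verbatim, with no need for the fibre-dimension detour.
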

\begin{proof} Let $f\in k(X)$ be a root of $t^n+f_1t^{n-1}+\cdots+f_n\in k(X)^G[t]$ and let $a\in X$ be a point where $f$ and every $f_i$ are defined.\;Then by Lemma \ref{orbit}(i) the restriction of $f$ to the irreducible variety  $G(a)$ is a well-defined  rational function $f|^{\ }_{G(a)}\in k(G(a))$.\;The image of the rational map
$f|^{\ }_{G(a)}: G(a)\dashrightarrow k$
is a finite set
since it lies in
the set of roots of $t^n+f_1(a)t^{n-1}+\cdots+f_n(a)\in k[t]$.\;Irreducibility of $G(a)$ then implies that
this
image is a single element of $k$, i.e., $f|^{\ }_{G(a)}$ is a constant.\;Whence
$f\in k(X)^G$.
 \end{proof}

\begin{lemma}\label{generic} Let $X$ be an irreducible variety and let $G$ be a connected subgroup of  ${\rm Aut}(X)$.\;Let $Y$ be a $G$-invariant locally closed subvariety of $\,X$.\;Let
$\fat$ be an
 algebraic family in $G$ such that $\fT(y)$ is dense in $G(y)$ for every point $y\in Y$.\;Then
the product
of the inverse of $\fat$  and $\fat$ is the  unital algebraic family $\fas$ in $G$ exhaustive for the natural action of $\,G$ on $Y$.
\end{lemma}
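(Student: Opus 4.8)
The plan is to unwind the definition of the product family and reduce the only substantive point—exhaustiveness—to an elementary statement about dense constructible subsets of an irreducible variety. Writing $\{\varphi^{-1}_t\}^{\ }_{t\in T}$ for the inverse of $\fat$, the product of the inverse and $\fat$ is the family $\fas$ indexed by $S=T\times T$ with $\psi_{(t_1,t_2)}=\varphi^{-1}_{t_1}\circ\varphi_{t_2}$. That this is an algebraic family contained in $G$ is immediate from the facts recorded in Section~2 (inverses and products of algebraic families in $G$ are again algebraic families in $G$). It is unital because taking $t_1=t_2=t$ gives $\psi_{(t,t)}=\varphi^{-1}_t\circ\varphi_t={\rm id}_X$, so ${\rm id}_X\in\fS$. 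Thus the real content is the claim that $\fS(y)=G(y)$ for every point $y\in Y$.

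One inclusion is free: since each $\psi_s\in G$, we have $\fS(y)\subseteq G(y)$. For the reverse, I would fix $y\in Y$ and an arbitrary $z\in G(y)$, and reduce matters to showing $\fT(y)\cap\fT(z)\neq\varnothing$; indeed, a point $\varphi_s(y)=\varphi_t(z)$ in this intersection yields $z=\varphi^{-1}_t(\varphi_s(y))=\psi_{(t,s)}(y)\in\fS(y)$. First I would observe that $z$ lies in $Y$: as $Y$ is $G$-invariant and $y\in Y$, the whole orbit $G(y)$ is contained in $Y$, so $z\in Y$; moreover $G(z)=G(y)$ because $z$ and $y$ lie in one $G$-orbit. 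Hence the density hypothesis applies to both points, giving that $\fT(y)$ is dense in $G(y)$ and $\fT(z)$ is dense in $G(z)=G(y)$.

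The key step is then the intersection argument. By Lemma~\ref{orbit}(i) the orbit $G(y)$ is an irreducible locally closed subvariety of $X$, and by Lemma~\ref{orbit}(ii) both $\fT(y)$ and $\fT(z)$ are constructible subsets of $G(y)$. A constructible subset that is dense in an irreducible variety contains a nonempty open subset of it; since any two nonempty open subsets of an irreducible variety meet, $\fT(y)$ and $\fT(z)$ intersect, which completes the proof that $G(y)\subseteq\fS(y)$. The main obstacle—modest here—is precisely this last point: one must combine the constructibility of Lemma~\ref{orbit}(ii) with the density hypothesis and the irreducibility of the orbit from Lemma~\ref{orbit}(i), rather than trying to argue directly with the possibly neither open nor closed images $\fT(y)$ and $\fT(z)$. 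Everything else is bookkeeping with the definitions of product, inverse, and unital family from Section~2.
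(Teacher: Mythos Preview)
Your argument is correct and is essentially the paper's own proof: define $\psi_{(t_1,t_2)}=\varphi^{-1}_{t_1}\circ\varphi_{t_2}$, note unitality, and for $y,z$ in the same $G$-orbit use density of $\fT(y)$ and $\fT(z)$ in the irreducible orbit $G(y)$ together with Lemma~\ref{orbit} to get $\fT(y)\cap\fT(z)\neq\varnothing$, whence $z\in\fS(y)$. You are merely more explicit than the paper about why constructibility plus density in an irreducible variety forces the intersection to be nonempty.
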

\begin{proof} By the definition of $\fas$,
\begin{equation}\label{psps}
\psi^{\ }_s=\varphi^{-1}_{t_1}\circ\varphi^{\ }_{t_2}\quad\mbox{for}\quad s=(t_1, t_2)\in S=T\times T.
\end{equation}

Take any points $y_1, y_2\in Y$ such that $G(y_1)=G(y_2)$.\;The density assumption then yields the equality
$\overline{\fT(y_1)}=\overline{\fT(y_2)}$,
where bar stands for the closure in $X$.\;By Lemma \ref{orbit}, this implies
$$\fT(y_1)\cap\fT(y_2)\neq\varnothing;$$
whence, $\varphi^{\ }_{t_1}(y_2)=\varphi^{\ }_{t_2}(y_1)$ for some $t_1, t_2\in T$.\;Therefore, $\psi_{s}(y_1)=y_2$ for $\psi_{s}$ defined by \eqref{psps}.\;Hence
$\fS(y_1)=G(y_1)$ for every point $y_1\in Y$, i.e., $\fas$ is exhaustive for the action of $G$ on $Y$.\;Its unitality
follows from \eqref{psps}.
 \end{proof}

\subsection*{6.\;Proof of Theorem \ref{main1}} First,  we shall show that it suffices to prove the following ``ge\-ne\-ric'' version of Theorem \ref{main1}:
\vskip 2mm
\noindent{\bf Theorem \boldmath$1^*$.}\;\;{\it   Let $X$, $G$, $\mathcal I$, and $Y$ are the same as in Theorem
{\rm \ref{main1}} and let $Y$ be irreducible.\;Then
there exist
a  dense open $G$-invariant subset $U$ in $Y$ and
a unital algebraic family $\fat$ in $G$
such that}
 \begin{enumerate}[\hskip 4.2mm \rm(i)]
 \item {\it $\fat$ is
derived from $\mathcal I$}{\rm;}
 \item {\it $\fT(y)$ is dense in $G(y)$ for every point $y\in U$.}
 \end{enumerate}
\vskip 2mm

Indeed, assuming that Theorem $1^*$ is proved, we can complete the proof of Theorem \ref{main1}
as follows.

The group $G$ is connected by Lemma \ref{con-equiv}.\;Therefore, every irreducible component of $\,Y$ is $G$-invariant by Corollary \ref{invccc}.\;From this and Lemma \ref{algfam}(i),(ii) we infer that it is sufficient to prove Theorem \ref{main1} for irreducible $Y$.\;
In this case we argue by induction on $\dim Y$.

Namely, the case $\dim Y=0$ is clear.\;Assume that
the claim of Theorem \ref{main1} holds for irreducible $G$-invariant subvarieties in $X$ of dimension $<\dim Y$ and consider the set $U$ from Theorem $1^*$.\;Let $Z_1,\ldots, Z_n$ be all the irreducible components of the variety $Y\setminus U$.\;By Corollary  \ref{invccc},
every $Z_i$ is $G$-invariant.\;Since $\dim Z_i<\dim Y$,  the inductive assumption implies  for every $i=1,\ldots, n$  the existence of a unital algebraic family $\{\psi^{(i)}_{s_i}\}_{s_i\in S_i}$ in $G$ such that
\begin{enumerate}[\hskip 4.2mm \rm(a)]
\item $\{\psi^{(i)}_{s_i}\}_{s_i\in S_i}$ is derived from
$\mathcal I${\rm;}
 \item $\{\psi^{(i)}_{s_i}\}_{s_i\in S_i}$ is exhaustive
for the natural action of $\,G$ on\;$Z_i$.
\end{enumerate}

On the other hand, Theorem $1^*$ and Lemma \ref{generic} imply the existence of a unital algebraic family
$\{\lambda_{r}\}_{r\in R}$ in $G$ such that
\begin{enumerate}[\hskip 4.2mm \rm(c)]
\item[\rm(c)] $\{\lambda_{r}\}_{r\in R}$ is derived from
$\mathcal I${\rm;}
 \item[\rm(d)] $\{\lambda_{r}\}_{r\in R}$ is exhaustive for the natural action of $\,G$ on\;$U$.
 \end{enumerate}

The claim of Theorem \ref{main1}
now follows from (a), (b), (c), (d) and Lemma \ref{algfam}(i),(ii).
This
completes the proof of Theorem \ref{main1}
assuming
that Theorem $1^*$ is proved.\quad  $\square$


\vskip 2mm

We now turn to the proof of Theorem $1^*$.\;Consider the map
\begin{equation}\label{tau}
\tau^{\ }_Y
\colon G\times Y\to Y\times Y, \quad(g, y)\mapsto (g(y), y).
\end{equation}
Its image $\Gamma^{\ }_Y$ is the graph  of the natural action of $G$ on $Y$:
\begin{equation}\label{graph}
\Gamma^{\ }_Y=\{(y_1, y_2)\in Y\times Y\mid G(y_1)=G(y_2)\}.
\end{equation}

\begin{claim}\label{claim1} Maintain the above notation.
\begin{enumerate}[\hskip 2.2mm\rm(i)]
\item There exists a
family $\fat$
derived from $\mathcal I$ such that $\tau^{\ }_Y(\fT\times Y)$
contains a dense open subset\;\,$V$\,of $\;\overline{\Gamma}^{\ }_Y$, where
bar stands for
the closure
in $Y\times Y$.
\item $\;\overline{\Gamma}^{\ }_Y$ is irreducible.
\end{enumerate}
\end{claim}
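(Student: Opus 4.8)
I want to prove Claim \ref{claim1}, which asserts: (i) there is a family $\fat$ derived from $\mathcal I$ whose image $\tau^{\ }_Y(\fT\times Y)$ contains a dense open subset of $\overline{\Gamma}^{\ }_Y$; and (ii) $\overline{\Gamma}^{\ }_Y$ is irreducible. The geometric content of (i) is that, although $G$ itself is not an algebraic variety, the graph of its action is so "algebraically accessible" that a single finite-dimensional derived family already sweeps out a dense chunk of $\overline{\Gamma}^{\ }_Y$. The plan is to first establish (ii) (irreducibility is needed to even speak of a dense open subset meaningfully and will feed into the argument for (i)), and then build the required family by a Noetherian/constructibility argument.

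For part (ii), I would argue as follows. By Lemma \ref{algfam}(iii), $G=\bigcup \fT$ where the union runs over all families $\fat$ derived from $\mathcal I$, and by Lemma \ref{algfam}(i),(v) these derived families are directed: any finite collection is contained in a single one. For a derived (hence algebraic) family $\fat$, the restricted map $\widetilde\varphi^{\ }_Y\colon T\times Y\to Y\times Y$, $(t,y)\mapsto(\varphi_t(y),y)$ is a morphism, so its image is constructible with irreducible closure (since $T\times Y$ is irreducible, $T$ and $Y$ being irreducible). Now $\Gamma^{\ }_Y=\bigcup \tau^{\ }_Y(\fT\times Y)$ over all derived $\fat$, so $\overline{\Gamma}^{\ }_Y=\overline{\bigcup_{\fat}\overline{\tau^{\ }_Y(\fT\times Y)}}$ is a union of irreducible closed sets indexed by a directed family. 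Since $Y\times Y$ is Noetherian, the ascending chain of closures $\overline{\tau^{\ }_Y(\fT\times Y)}$ (directed by inclusion) stabilizes at a single maximal irreducible closed set, which must equal $\overline{\Gamma}^{\ }_Y$. Irreducibility follows.

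That same stabilization gives part (i) almost immediately, and I expect this to be the cleanest route. Let $\fat$ be a derived family realizing the maximal (hence stabilized) closure, so $\overline{\tau^{\ }_Y(\fT\times Y)}=\overline{\Gamma}^{\ }_Y$. Since $\fat$ is algebraic, $\tau^{\ }_Y(\fT\times Y)$ is the image of a morphism from $T\times Y$, hence constructible by Chevalley's theorem; a constructible set that is dense in the irreducible variety $\overline{\Gamma}^{\ }_Y$ necessarily contains a dense open subset $V$ of $\overline{\Gamma}^{\ }_Y$. This is the family and the open set required. The description \eqref{graph} of $\Gamma^{\ }_Y$ as $\{(y_1,y_2)\mid G(y_1)=G(y_2)\}$ just confirms that $\Gamma^{\ }_Y$ really is the union of the constructible pieces $\tau^{\ }_Y(\fT\times Y)$, since $y_2\in G(y_1)$ means $y_2=g(y_1)$ for some $g\in G=\bigcup\fT$.

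The main obstacle, and the step deserving the most care, is the directedness-plus-Noetherian argument that lets one pass from the uncountable union $\Gamma^{\ }_Y=\bigcup_{\fat}\tau^{\ }_Y(\fT\times Y)$ to a \emph{single} derived family whose image is dense. The subtlety is that an arbitrary union of irreducible constructible sets need not have irreducible closure; what saves us is that Lemma \ref{algfam}(v) makes the indexing set directed, so the closures form a directed family of irreducible closed subsets, and Noetherianity of $Y\times Y$ forces a maximum. I would make sure to invoke (v) explicitly to absorb any two derived families into one, and to note that the product of derived families is again derived (as products of algebraic, unital families are algebraic and unital, per Section 2), so that the family realizing the stabilized closure is genuinely derived from $\mathcal I$. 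Once this is set up correctly, both (i) and (ii) drop out together.
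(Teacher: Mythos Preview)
Your proposal is correct and follows essentially the same route as the paper. Both arguments observe that for each derived family $\fat$ the set $\tau^{\ }_Y(\fT\times Y)$ is the image of a morphism from the irreducible variety $T\times Y$, hence constructible with irreducible closure by Chevalley; both use Lemma~\ref{algfam}(i),(iii) to see that the derived families are directed under inclusion; and both then pick a derived family whose image has maximal closure, from which (i) and (ii) follow. The only cosmetic difference is that the paper phrases the stabilization step explicitly via dimension (choose $\fat$ maximizing $\dim\overline{\tau^{\ }_Y(\fT\times Y)}$, then any larger derived family has the same closure by irreducibility and equal dimension), whereas you phrase it as ``Noetherianity forces a maximum''; in the category of varieties these are the same observation, though you might note that your Noetherian claim uses the irreducibility of the members (a directed family of arbitrary closed sets need not have a maximum in the family).
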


\begin{proof}[Proof of Claim {\rm \ref{claim1}}.]  If $\fas$ is an algebraic family in $G$, then the subset $\tau^{\ }_Y(\fS\times Y)$ of $\Gamma^{\ }_Y$ is the image of the morphism
\begin{equation*}
S\times Y\to Y\times Y,\quad (s, y)\mapsto (\psi^{\ }_s(y), y)
\end{equation*}
 of irreducible varieties (see \eqref{wtilde}).\;Chevalley's
 theorem on the image of morphism then implies that $\overline{\tau^{\ }_Y(\fS\times Y)}$ is an irreducible subvariety of $\overline{\Gamma}^{\ }_Y$  and $\tau^{\ }_Y(\fS\times Y)$ contains a dense open subset of $\overline{\tau^{\ }_Y(\fS\times Y)}$.

 From $\dim \overline{\Gamma}^{\ }_Y\geqslant \dim \overline{\tau^{\ }_Y(\fS\times Y)}$ we conclude that there exists a fa\-mily $\fat$ derived from $\mathcal I$ on which the maximum of $\dim \overline{\tau^{\ }_Y(\fS\!\times Y)}$ is attained when
$\fas$ runs over all families derived from $\mathcal I$.\;If  $\fas$ is a family derived from $\mathcal I$ such that
$\fT\subseteq\fS$, then the maximality condition and irreducibility of $\overline{\tau^{\ }_Y(\fS\times Y)}$  imply that
\begin{equation}\label{eqv}
\overline{\tau^{\ }_Y(\fS\times Y)}=\overline{\tau^{\ }_Y(\fT\times Y)}.
\end{equation}

Take an element $g\in G$.\;By Lemma \ref{algfam}(iii),(i), there is an algebraic family $\fas$ in $G$ such that $\fT\subseteq\fS$ and $g\in \fS$.\;From \eqref{eqv} and \eqref{tau} we then conclude that $\Gamma^{\ }_Y\subseteq \overline{\tau^{\ }_Y(\fT\times Y)}$.\;Since $\overline{\tau^{\ }_Y(\fT\times Y)}\subseteq \overline{\Gamma}$, we get
$\overline{\tau^{\ }_Y(\fT\times Y)}=\overline{\Gamma}^{\ }_Y$.
This completes the proof.
 \end{proof}

Endow $X\times X$ with  the action of $G$ via the second factor:
\begin{equation}\label{cdotact}
g\cdot (x_1, x_2):=(x_1, g(x_2)), \quad x_i\in X, g\in G.
\end{equation}
The second projection
$X\times X\to X$, $(x_1, x_2)\mapsto x_2$
is then $G$-equivariant and, by \eqref{graph}, $\Gamma^{\ }_Y$ and $\overline{\Gamma}^{\ }_Y$ are $G$-invariant.

\begin{claim}\label{claim2} $\fat$ and\;\,$V$\,in Claim {\rm \ref{claim1}} can be chosen so
that\;\,$V$\,is $G$-invariant.
\end{claim}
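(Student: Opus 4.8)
The plan is to symmetrize $V$ by saturating it under the $G$-action \eqref{cdotact} and then to absorb the finitely many translates that actually occur into a single enlarged family. First I would record the transformation rule for the image under \eqref{tau}: writing $\varphi_T\circ g^{-1}:=\{\varphi_t\circ g^{-1}\}_{t\in T}$, the substitution $y'=g(y)$ together with $g\cdot(\varphi_t(y),y)=(\varphi_t(y),g(y))=\big((\varphi_t\circ g^{-1})(g(y)),g(y)\big)$ gives
\begin{equation*}
g\cdot\tau_Y(\varphi_T\times Y)=\tau_Y\big((\varphi_T\circ g^{-1})\times Y\big)\qquad(g\in G).
\end{equation*}
Each $g$ acts on $X\times X$ as the automorphism $\mathrm{id}_X\times g$, hence as a homeomorphism preserving $\overline{\Gamma}_Y$ by \eqref{graph}; therefore $g\cdot V$ is open in $\overline{\Gamma}_Y$ for every $g$, and
$$V^\ast:=\bigcup_{g\in G} g\cdot V$$
is an open, dense (it contains $V$) and $G$-invariant subset of $\overline{\Gamma}_Y$. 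This $V^\ast$ is the sought-for set; what remains is to exhibit a single family, derived from $\mathcal I$, whose $\tau_Y$-image contains it.

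The decisive point is that although $V^\ast$ is an a priori infinite union of translates, only finitely many are needed: $\overline{\Gamma}_Y$ is a Noetherian space, so the open cover $\{g\cdot V\}_{g\in G}$ of the open set $V^\ast$ admits a finite subcover $V^\ast=\bigcup_{i=1}^N g_i\cdot V$. By Lemma \ref{algfam}(iii) each $g_i^{-1}$ lies in some family $F_i$ derived from $\mathcal I$, and every such family is unital (it is a product of members of $\mathcal I$ and their inverses, all unital); hence the product $\{\mu_r\}_{r\in R}:=F_1\circ\cdots\circ F_N$ is a unital family derived from $\mathcal I$ with $g_i^{-1}\in\mu_R$ for all $i$ by Lemma \ref{algfam}(i). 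Set $\Psi:=\{\varphi_t\circ\mu_r\}_{(t,r)\in T\times R}$, the product of $\{\varphi_t\}_{t\in T}$ and $\{\mu_r\}_{r\in R}$; it is unital and derived from $\mathcal I$. For each $i$, choosing $r$ with $\mu_r=g_i^{-1}$ shows $\varphi_T\circ g_i^{-1}\subseteq\Psi_{T\times R}$, whence by the transformation rule
$$g_i\cdot V\subseteq\tau_Y\big((\varphi_T\circ g_i^{-1})\times Y\big)\subseteq\tau_Y(\Psi_{T\times R}\times Y),$$
and taking the union over $i$ yields $V^\ast\subseteq\tau_Y(\Psi_{T\times R}\times Y)$.

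Finally I would simply replace the pair $(\{\varphi_t\}_{t\in T},V)$ furnished by Claim \ref{claim1} with $(\Psi,V^\ast)$. Since $V^\ast$ is dense in $\overline{\Gamma}_Y$ we still have $\overline{\tau_Y(\Psi_{T\times R}\times Y)}=\overline{\Gamma}_Y$, so $V^\ast$ is a dense open subset of $\overline{\Gamma}_Y$ contained in $\tau_Y(\Psi_{T\times R}\times Y)$ with $\Psi$ derived from $\mathcal I$; thus the conclusion of Claim \ref{claim1} persists, while $V^\ast$ is now $G$-invariant by construction. The single genuine obstacle is precisely the passage from the infinite saturation $\bigcup_{g\in G}g\cdot V$ to one family: this is where Noetherian quasi-compactness (to reduce to finitely many $g_i$) and the packaging of the corresponding $g_i^{-1}$ into one algebraic family via Lemma \ref{algfam} are indispensable. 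Everything else is a formal consequence of the transformation rule for \eqref{tau} under the action \eqref{cdotact}.
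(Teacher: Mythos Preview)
Your proof is correct and follows essentially the same approach as the paper: saturate $V$ under the $G$-action \eqref{cdotact}, extract a finite subcover by quasi-compactness, and absorb the finitely many $g_i$ into a single derived family so that the enlarged $\tau_Y$-image contains the saturated set. The only cosmetic difference is packaging---the paper puts each $g_i$ into a derived family and takes the product with $\{\varphi_t\}$ directly, whereas you first bundle the $g_i^{-1}$ into a single family $\{\mu_r\}$ and then form $\Psi=\{\varphi_t\circ\mu_r\}$---but the underlying idea and the use of Lemma~\ref{algfam}(i),(iii) are identical.
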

\begin{proof}[Proof of Claim {\rm \ref{claim2}}.]
Maintain the notation of Claim \ref{claim1} and consider in $\overline{\Gamma}^{\ }_Y$ the $G$-invariant dense open subset
\begin{equation}\label{covering}
V_0:=\textstyle\bigcup^{\ }_{g\in G} g\cdot V.
\end{equation}
Since $V_0$ is quasi-compact, its covering \eqref{covering} by open subsets $g\cdot V$, $g\in G$, contains a finite subcovering:
\begin{equation}\label{fcovering}
V_0=\textstyle\bigcup_{i}^ng_i\cdot V\quad\mbox{for some elements $g_1,\ldots, g_n\in G$}.
\end{equation}

By Lemma \ref{algfam}(iii), every $g_i$ is contained in a family derived from $\mathcal I$.\;Taking a product of $\fat$ with these families, we obtain a family $\fas$ derived from $\mathcal I$ such that
\begin{equation}\label{includ}
{\fT}\circ g^{-1}_i\subseteq \fS\;\,\mbox{for every $i=1,\ldots, n$.}
\end{equation}

Since $V\subseteq \tau^{\ }_Y(\fT\times Y)$, from \eqref{tau} and \eqref{cdotact} we obtain
\begin{equation}\label{giV}
g_i\cdot V\subseteq \{(\ft(y), g_i(y))\mid t\in T, y\in Y\}.
\end{equation}
This yields
\begin{equation}\label{ii}
\begin{split}
\tau^{\ }_Y(\fS\times Y)&=\big\{\big(\fs(y), y\big)\mid s\in S, y\in Y\big\}\\
&=\big\{\big(\fs(g_i(y)), g_i(y)\big)\mid s\in S, y\in Y\big\}\\[-1.5pt]
&
\supseteq \big\{\big({\ft}\big(g^{-1}_i(g_i(y))\big), g_i(y)\big)\mid t\in T, y\in Y\big\}\quad\mbox{(by \eqref{includ})}
\\[-1.5pt]
&\supseteq g_i\cdot V \quad\mbox{(by \eqref{giV})}.
\end{split}
\end{equation}

Thus $V_0\subseteq \tau^{\ }_Y(\fS\times Y)$
 by \eqref{fcovering} and \eqref{ii}.\;So,
  replacing $\fat$ and $V$ by, resp.,  $\fas$ and $V_0$,  we may attain that $V$ in Claim \ref{claim1} is $G$-invariant.
 \end{proof}

To complete the proof of Theorem $1^*$,
consider the second projection
\begin{equation}\label{pi2}
\pi^{\ }_Y\colon \overline{\Gamma}^{\ }_Y\to Y, \quad (y_1, y_2)\mapsto y_2;
\end{equation}
it is a $G$-equivariant surjective morphism of irreducible varieties.\;Let $\fat$ and $V$ be as in Claim \ref{claim1} and let $V$  be $G$-invariant by Claim \ref{claim2}.\;Since $V$ is a dense open subset of
$\overline{\Gamma}^{\ }_Y$, by
Chevalley's
 theorem on the image of morphism, $\pi^{\ }_Y(V)$ contains a dense open subset of $Y$.\;Let $U$ be the union of all dense open subsets of $Y$ lying in  $\pi^{\ }_Y(V)$.\;Since  $V$ is $G$-invariant and $\pi^{\ }_Y$ is $G$-equivariant, $\pi^{\ }_Y(V)$ is $G$-invariant.\;Therefore,
$U$ is
also $G$-invariant.

Take a point $y\in U$.\;Since  $V\subseteq \Gamma^{\ }_Y$, $\pi^{-1}_Y(y)\cap \Gamma^{\ }_Y=\{(g(y), y)\mid g\in G\}$, and $V\supseteq \big\{\big(g(y), y\big)\mid g\in \fT\big\}$,
we have
\begin{align}
\varnothing\neq V\cap \pi^{-1}_Y(y)&=V\cap \Gamma^{\ }_Y\cap \pi^{-1}_Y(y)=V\cap \big\{\big(g(y), y\big)\mid g\in G\big\}\label{end1}\\
&\subseteq \big\{\big(g(y), y\big)\mid g\in \fT\big\}.\label{end2}
\end{align}
By Lemma \ref{orbit}, $\big\{\big(g(y), y\big)\mid g\in G\big\}$ is an irreducible locally closed subset of $\overline{\Gamma}^{\ }_Y$.\;From \eqref{end1} we then infer that  $V\cap \big\{\big(g(y), y\big)\mid g\in G\big\}$ is a dense open subset of  $\big\{\big(g(y), y\big)\mid g\in G\big\}$, and from \eqref{end2} that
$\fT(y)$ is dense in $G(y)$.\;This completes the proof of Theorem $1^*$ and hence that of Theorem\;\ref{main1}.$\quad \square$


\subsection*{7.\;Proof of Theorem \ref{main3}}
Maintain the notation of the proof of Theorem\;\ref{main1}.\;It is proved there
that the restriction of $\pi^{\ }_Y$ to $V$ is a dominant morphism of irreducible varieties $V\to Y$ whose fiber over every point $y$ of a dense open subset $U$ of $Y$ is isomorphic to a dense subvariety of $G(y)$.\;Hence, the dimension of this fiber is $\dim G(y)$.\;The claim now follows from
the fiber dimension theorem \cite[5.6]{Gro65}.\quad $\square$

\subsection*{8.\;Proof of Theorem \ref{main2}} By Lemma \ref{algfam}(iv), it suffices to give a proof for $Y=X.$\;We shall use the idea utilized in \cite[4]{Lun73} for proving the existence of a generic stabilizer for reductive group actions on smooth affine varieties. Below is maintained the notation used in the proof of Theorem\;\ref{main1}.

The plan is to repeat several times the procedure of replacing $X$ by its open dense subset having some necessary additional properties; in order to avoid unnecessary extra notation,  this subset will still be denoted by $X$.\;An open subset of the original $X$ obtained at the last step will be the sought-for $U$ from the formulation of Theorem \ref{main2}.

Since any subfield of $k(X)$ containing $k$ is finitely generated over $k$, replacing $X$ by an appropriate  invariant dense open subset of $X$ we can (and shall)
find an irreducible affine normal variety $Z$ and a surjective morphism
\begin{equation*}\label{pim}
\rho\colon X\to Z
\end{equation*}
such that
$\rho^*(k(Z))=k(X)^G$.\;This equality implies that $\rho$ is a separable morphism; see, e.g., \cite[AG, Prop.\,2.4]{Bor91}.

The construction yields that

\begin{enumerate}[\hskip 5.6mm \rm(a)]
\item[(q${}^{\ }_1$)] $G(x)\subseteq \rho^{-1}(\rho(x))$ for every point $x\in X$.
\end{enumerate}

 By
the fibre dimension theorem and Theorem \ref{main3}, further
replacing $X$ and $Z$ by the appropriate open sets, we can (and shall) attain  the following properties:
\begin{enumerate}[\hskip 5.6mm \rm(a)]
\item[(q${}^{\ }_2$)] for every point $z\in Z$, the dimension of every irreducible component of  $\rho^{-1}(z)$
is equal to $\dim X-\dim Z$;
\item[(q${}^{\ }_3$)] $\dim G(x)=\dim G(x')$ for all
points $x, x'\in X$.
\end{enumerate}

Lemma \ref{orbit}(i) and (q${}^{\ }_3$) imply that $G(x)$ is closed in $X$ for every point $x\in X$.

By Grothendieck's generic freeness lemma \cite[6.9.2]{Gro65}, after replacing $Z$ by a principal open subset, we can (and shall) assume that
\begin{enumerate}[\hskip 5.6mm \rm(a)]
\item[(q${}^{\ }_4$)] there exists an affine open subset $X_0$ of $X$ such that $\rho(X_0)=Z$ and
    $k[X_0]$ is a free $\rho^*(k[Z])$-module.
\end{enumerate}
Below, for any subsets $S\subseteq X$ and $R\subseteq X\times X$, we put
\begin{equation*}\label{S}
S_0:=S\cap X_0,\quad R_0:=R\cap (X_0\times X_0).
\end{equation*}

Finally, replacing $X$ by the invariant open set $\bigcup_{g\in G} g(X_0)$, we can (and shall) assume that
\begin{enumerate}[\hskip 5.6mm \rm(a)]
\item[(q${}^{\ }_5$)] the intersection of $X_0$ with every $G$-orbit in $X$ is nonempty.
\end{enumerate}

Consider now in $X\times X$ the $G$-invariant (with respect to action \eqref{cdotact}) closed subset
\begin{equation}\label{XYX}
\XY:=\{(x_1, x_2)\in X\times X\mid \rho(x_1)=\rho(x_2)\}
\end{equation}
and its affine open subset $(\XY)_0$.
\begin{claim}\label{denseXY0}
$(\XY)_0$ is dense in $\XY$.
\end{claim}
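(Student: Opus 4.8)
The plan is to reduce the global density statement to a fiberwise one, and to settle the fiberwise statement by exploiting that the $G$-orbits sit inside the fibers of $\rho$. Recall first that $\rho$ is $G$-invariant: the equality $\rho^*(k(Z))=k(X)^G$ forces $\rho$ to be constant on $G$-orbits, i.e. $\rho\circ g=\rho$ for every $g\in G$, so each fiber $\rho^{-1}(z)$ is a $G$-stable subvariety of $X$. Hence $\XY$ is fibered over $Z$ by the products $\rho^{-1}(z)\times\rho^{-1}(z)$, and $(\XY)_0$ meets the fiber over $z$ in $\big(X_0\cap\rho^{-1}(z)\big)\times\big(X_0\cap\rho^{-1}(z)\big)$.

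First I would observe that it suffices to prove, for every $z\in Z$, that $X_0\cap\rho^{-1}(z)$ is dense in $\rho^{-1}(z)$. Indeed, this fiberwise density gives that $\big(X_0\cap\rho^{-1}(z)\big)^2$ is dense in $\big(\rho^{-1}(z)\big)^2$, which is the fiber of $\XY$ over $z$; since every point of $\XY$ lies in such a fiber, and the fiber is closed in $\XY$, the closure of a dense subset of the fiber already recovers the whole fiber inside $\XY$. Taking the union over $z\in Z$ then yields $\XY\subseteq\overline{(\XY)_0}$, as wanted. This step is purely topological and, crucially, it disposes at one stroke of any irreducible components of $\XY$ lying over proper closed subsets of $Z$: their points still lie in fibers over which density has been verified.

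The heart of the matter is therefore to show that the open set $X_0$ meets every irreducible component of every fiber $\rho^{-1}(z)$. Here I would use the orbits. Let $C$ be an irreducible component of $\rho^{-1}(z)$ and choose a point $x$ lying on $C$ but on no other component. By Lemma \ref{orbit}(i) the orbit $G(x)$ is irreducible, and by (q$_1$) it is contained in $\rho^{-1}(z)$; being irreducible, containing $x$, and meeting the set $C\setminus\bigcup_{C'\neq C}C'$, it is forced to lie inside $C$. On the other hand, by (q$_5$) the orbit $G(x)$ meets $X_0$. Consequently $X_0\cap C\supseteq X_0\cap G(x)\neq\varnothing$; being a nonempty open subset of the irreducible variety $C$, it is dense in $C$. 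Running this over all components of $\rho^{-1}(z)$ gives the required fiberwise density.

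The main obstacle is exactly the step just carried out. Because the fibers $\rho^{-1}(z)$ need not be irreducible and $\rho$ is not assumed flat, the number of their components may jump over special $z$, so an a priori ``large'' open set such as $X_0$ could in principle avoid an entire component of some fiber, in which case $(\XY)_0$ would fail to be dense. Neither the freeness of $\rho|_{X_0}$ supplied by (q$_4$) nor a bare dimension count rules this out by itself; what saves the day is that each fiber component is swept out by a single $G$-orbit and that, by (q$_5$), every orbit has been arranged to meet $X_0$. I would therefore present the fiberwise reduction first and then the orbit argument, the latter being the only non-formal ingredient.
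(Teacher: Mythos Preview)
Your proof is correct and rests on exactly the same three ingredients the paper uses: irreducibility of orbits (Lemma~\ref{orbit}(i)), orbits lying in fibers of $\rho$ (property (q$_1$)), and every orbit meeting $X_0$ (property (q$_5$)). The organization, however, differs. The paper skips both the fiberwise reduction and the component analysis: given any $(x_1,x_2)\in\XY$, it notes that $G(x_1)\times G(x_2)$ is an irreducible subset of $\XY$ containing $(x_1,x_2)$; by (q$_5$) each factor meets $X_0$, so by irreducibility $(G(x_1)\times G(x_2))_0$ is dense in $G(x_1)\times G(x_2)$, whence $(x_1,x_2)$ lies in the closure of $(\XY)_0$. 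Your route through fibers and their components is sound but adds scaffolding that the product-of-orbits viewpoint makes unnecessary; what it does buy you is the explicit side observation that $X_0$ is dense in every fiber of $\rho$, which the paper's argument leaves implicit.
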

\begin{proof}[Proof of Claim {\rm\ref{denseXY0}}]
Take a point $(x_1, x_2)\!\in\! \XY$.\;From \eqref{XYX} and (q${}^{\ }_1$) we infer that $G(x_1)\!\times\! G(x_2)\!\subseteq\! \XY$, and from (q${}^{\ }_5$)
and Lemma \ref{orbit}(i) that
$(G(x_1)\!\times\! G(x_2))_0$
is
a dense open subset of $G(x_1)\!\times\! G(x_2)$.\;Therefore, since $(x_1, x_2)\!\in\! G(x_1)\!\times\! G(x_2)$, the closure of
$(G(x_1)\!\times\! G(x_2))_0$ in
$\XY$ contains $(x_1, x_2)$.\;Whence the claim, because
$
(G(x_1)\!\times\! G(x_2))_0\!\subseteq\!(\XY)_0$.
 \end{proof}

Next, consider the set
\begin{equation}\label{Gamma}
\Gamma:=\Gamma^{\ }_X
 \end{equation}
 defined by \eqref{graph}.\;By (q${}^{\ }_1$), we have   $\Gamma\subseteq\XY$. Since $\XY$ is closed in $X\times X$, this yields
$\;\overline{\Gamma}\subseteq\XY$ (see Claim \ref{claim1}(i)).

\begin{claim}\label{fo}
$\;\overline{\Gamma}=\XY$.
\end{claim}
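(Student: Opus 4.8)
Since $\XY$ is closed in $X\times X$ and contains $\Gamma$, we already have $\overline{\Gamma}\subseteq\XY$, so only the reverse inclusion is at issue.\;The first observation is that $\Gamma=G\cdot\Delta$, where $\Delta\subseteq\XY$ is the diagonal and $G$ acts via the second factor as in \eqref{cdotact}: indeed, $(x_1,x_2)\in\Gamma$ iff $x_2=g(x_1)$ for some $g\in G$, i.e.\ iff $(x_1,x_2)=g\cdot(x_1,x_1)$.\;Hence $\overline{\Gamma}=\overline{G\cdot\Delta}$, and the claim asserts that the $G$-orbit of the diagonal is dense in the fibre product.\;Because of this $G$-invariance and Claim \ref{denseXY0}, it suffices to produce a dense open $G$-invariant subset of $Z$ over whose preimage every fibre of $\rho$ is a single $G$-orbit; replacing $X$ by that (invariant, dense, open) preimage we will then have $\XY=\Gamma\subseteq\overline{\Gamma}$ set-theoretically, and the claim follows.

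The plan is therefore to reduce everything to the single assertion that the generic fibre of $\rho$ is one orbit, which I will control by a dimension count.\;Let $m:=\dim G(x)$ be the common orbit dimension, well defined by (q$_3$), and set $d:=\dim X-\dim Z$.\;By (q$_2$) every fibre $\rho^{-1}(z)$ is equidimensional of dimension $d$, so each fibre contains its (closed, by (q$_3$) and Lemma \ref{orbit}(i)) orbits of dimension $m$, giving at once $m\leqslant d$; dually this yields the formal inequality $\dim Z=\mathrm{trdeg}^{\ }_k k(X)^G\leqslant\dim X-m$ since $k(X)^G$ is constant along orbits.\;Moreover the generic fibre of $\rho$ is irreducible (it is the generic fibre of a dominant morphism of irreducible varieties), and in fact geometrically irreducible, because $\rho$ is separable and, by the Corollary to Lemma \ref{orbit}, $k(X)^G=\rho^*(k(Z))$ is algebraically closed in $k(X)$.\;Consequently, after a further shrinking of $Z$ we may assume every fibre of $\rho$ is irreducible; a single irreducible fibre of dimension $d$ then coincides with any closed orbit it contains \emph{provided} $m=d$.\;Thus the whole proof is reduced to the equality $m=d$.

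It remains to prove the reverse inequality $m\geqslant d$, i.e.\ that the generic orbit is dense in the generic fibre.\;This is the genuine, Rosenlicht-type content, and it is here that I follow the idea of \cite[4]{Lun73} and use generic freeness.\;By Theorem \ref{main1} fix a family $\fat$ in $G$ exhaustive for the action of $G$ on $X$, so that $G(x)=\fT(x)$ for all $x$ and the entire orbit equivalence is carried by the \emph{finite-dimensional} morphism $\widetilde\varphi\colon T\times X\to X$.\;Suppose, for contradiction, that $m<d$.\;Passing to the generic fibre $X_K$ of $\rho$ over $K:=k(Z)=k(X)^G$, which is geometrically irreducible of dimension $d$ by the previous paragraph, the family $\fat$ induces a finite-dimensional family on $X_K$ whose orbits are exactly the $G$-orbits, all closed and of dimension $m<d$.\;Hence the orbit map $x\mapsto\overline{G(x)}=\overline{\fT(x)}$ is, on a dense open subset of $X_K$, a nonconstant morphism into a finite-dimensional parameter space whose fibres are these $m$-dimensional orbits; pulling back a nonconstant coordinate produces a rational function on $X_K$ that is nonconstant yet constant on $G$-orbits, i.e.\ a $G$-invariant rational function on $X$ transcendental over $K$.\;This contradicts $k(X)^G=K$.\;Therefore $m=d$.\;Finally, generic freeness (q$_4$) makes $\rho$ flat, so the generic-fibre conclusion ``the fibre is a single orbit'' spreads out to the preimage of a dense open subset of $Z$; replacing $X$ by that preimage gives $\XY=\Gamma\subseteq\overline{\Gamma}$ and hence $\overline{\Gamma}=\XY$.

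The main obstacle is the inequality $m\geqslant d$ of the last paragraph; everything preceding it is bookkeeping.\;Its delicacy is precisely that $G$ is not a priori a finite-dimensional algebraic group, so that the classical construction of enough invariants separating generic orbits is unavailable.\;The two devices that make the argument go through are the exhaustive finite-dimensional family of Theorem \ref{main1}, which realizes the orbit equivalence as genuine algebraic data and lets one form the orbit map, and the generic freeness (q$_4$), which supplies the flatness needed to transport the conclusion from the generic fibre to an open set over $Z$.
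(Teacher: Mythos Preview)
Your overall strategy --- reduce Claim~\ref{fo} to the equality $m=d$ of orbit and fibre dimensions, then derive $m\geqslant d$ by producing, under the contrary hypothesis, a $G$-invariant rational function transcendental over $K=k(X)^G$ --- is a natural one, but the argument breaks down precisely at the step where you manufacture that new invariant.

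You write that ``the orbit map $x\mapsto\overline{G(x)}=\overline{\fT(x)}$ is, on a dense open subset of $X_K$, a nonconstant morphism into a finite-dimensional parameter space.'' But no such parameter space is exhibited, and this is not a minor omission. The target would have to be something like a Chow variety or Hilbert scheme, which requires projectivity or a compactification together with boundedness of degree; none of this is available here. Having an exhaustive family $\fat$ does make the assignment $x\mapsto\fT(x)$ ``algebraic'' in the sense that $\widetilde\varphi$ is a morphism, but $\fT$ is not a group and its orbits do not arise as fibres of any morphism you have constructed. Producing a rational function that is constant on $G$-orbits but nonconstant on the fibres of $\rho$ is exactly the content of the Rosenlicht-type Theorem~\ref{main2}, whose proof rests on Claim~\ref{fo} --- so invoking it here is circular. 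Your appeal to generic freeness (q$_4$) at the end is also misplaced: it does not give you a flat family of orbits, and in any case there is nothing to spread out until $m=d$ is established.

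For comparison, the paper's proof avoids any parameter space entirely. It works on the affine patch $(X\times_Z X)_0$ and shows directly that a regular function vanishing on $\Gamma_0$ must vanish identically. The function is lifted to $h=\sum p_1^*(s_i)\,p_2^*(t_i)$ on $X_0\times X_0$; generic freeness (q$_4$) is used to arrange that the $t_i$ are linearly independent over $\rho^*(k[Z])$, and then one observes that $\sum s_i\,t_i^g=0$ for every $g\in G$. Artin's theorem on linear independence of distinct field automorphisms (applied to the $G$-translates $t_i^g$) then forces all $s_i=0$, whence $f=0$. This purely algebraic device is what replaces the missing ``orbit map,'' and it is the genuine content of the claim.
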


First, we shall show how to deduce Theorem \ref{main2} from Claim \ref{fo}.

By \eqref{Gamma} and Claims \ref{claim1}(ii), \ref{fo},  the variety $\;\overline{\Gamma}=\XY$
is irreducible.\;Consi\-der its dense open subset $V$ from Claim \ref{claim2} and
morphism $\pi^{\ }_{X}\colon\;\overline{\Gamma}\to X$ defined by
\eqref{pi2} for $Y=X$.\;If $B$ is an irreducible component of $\overline{\Gamma}\setminus V$ such that
$\pi^{\ }_{X}(B)$ is dense in $X$, then, by the fiber dimension theorem,
$\dim \pi^{-1}_{X}(x)>\dim \pi^{-1}_{X}(x)\cap B$ for every point $x\in X$ lying off a proper closed subset of $X$.\;This and property (q${}^{\ }_3$) imply that $V\cap \pi_{X}^{-1}(x)$ is dense in $ \pi_{X}^{-1}(x)$ for  every such $x$.\;On the other hand,
 $\pi_{X}^{-1}(x)=\rho^{-1}(\rho(x))\times x$ by \eqref{XYX} and, as explained at the end of the proof of Theorem \ref{main1},  $V\cap \pi_X^{-1}(x)$ is a dense open subset of $G(x)\times x$.\;Since $G(x)\subseteq \rho^{-1}(\rho(x))$, this shows that $G(x)$ is dense in $ \rho^{-1}(\rho(x))$.\;The closedness of $G(x)$ in $X$ then
 implies that $G(x)=\rho^{-1}(\rho(x))$ for every point $x\in X$ lying off a proper closed subset.\;This means that
 replacing
 $Z$ by its open subset and $X$ by the inverse image of this subset, we can (and shall) assume that $\rho$ is an orbit map, i.e., the fibers of $\rho$ are the $G$-orbits in $X$.\;Since $\rho$  is a surjective separable morphism and $Z$ is a normal variety,  by \cite[Prop.\,II.6.6]{Bor91} this implies that $\rho\colon X\to Z$ is the geometric quotient.\;Thus
the proof of  Theorem \ref{main2} is completed provided that Claim \ref{fo} is proved.\quad  $\square$

\medskip

So it remains to prove Claim \ref{fo}.

\begin{proof}[Proof of Claim {\rm\ref{fo}}] We divide it into three steps.

  1. In view of Claim \ref{denseXY0}, it suffices to prove the density of
$\Gamma_0$  in  $(\XY)_0$.\;Since
$(\XY)_0$ is an affine variety, the latter is reduced to proving that if a function $f\in k[(\XY)_0]$ vanishes on $\Gamma_0$,
\begin{equation}\label{vanishG}
f|^{\ }_{\Gamma_0}=0,
\end{equation}
then $f=0$.\;To prove this, note that the closedness of $(\XY)_0$ in $X_0\times X_0$ implies the existence of a function
$h\in k[X_0\times X_0]$ such that
\begin{equation}\label{h}
h|^{\ }_{(\XY)_0}=f.
\end{equation}
In turn, since $k[X_0\times X_0]=p_1^*(k[X_0])\otimes_k p_2^*(k[X_0])$, where $p_i\colon X_0\times X_0\to X_0$, $(x_1, x_2)\mapsto x_i$, there are functions $s_1,\ldots, s_m$, $t_1,\ldots, t_m\in k[X_0]$
such that
\begin{equation}\label{st}
h=\textstyle \sum_{i=1}^m p_1^*(s_i) p_2^*(t_i).
\end{equation}


2. By an appropriate replacement of $h$ and $s_1,\ldots, s_m, t_1,\ldots, t_m$ we may
obtain that $t_1,\ldots, t_m$
are linearly
independent over $\rho^*(k[Z])$.\;Indeed, by  property (q${}^{\ }_4$), there are
functions $b_1,\ldots, b_r\in k[X_0]$,  linearly independent over  $\rho^*(k[Z])$,
 such that
\begin{equation}\label{cij}
t_i=\textstyle \sum_{j=1}^r c_{ij} b_j \quad\mbox{for some}\;\; c_{ij}\in \rho^*(k[Z]),\; i=1,\ldots, m.
\end{equation}
In view of \eqref{st} and \eqref{cij},
we have
\begin{equation}\label{hh}
h=\textstyle \sum_{j=1}^{r}\bigl(\sum_{i=1}^{m} p_1^*(s_i) p_2^*(c_{ij})\bigr) p_2^*(b_j).
\end{equation}

Take a point  $x=(x_1, x_2)\in (\XY)_0$. Since $\rho(x_1)=\rho(x_2)$, we have
\begin{equation}\label{==}
c_{ij}(x_1)=c_{ij}(x_2)\quad\mbox{for all $i, j$.}
\end{equation}
From \eqref{hh} and \eqref{==} we then obtain
\begin{equation}\label{=h=}
\begin{split}
h(x)&=\textstyle \sum_{j=1}^{r}\bigl(\sum_{i=1}^{m} s_i(x_1) c_{ij}(x_2)\bigr) b_j(x_2)\\
&=\textstyle \sum_{j=1}^{r}\bigl(\sum_{i=1}^{m} s_i(x_1) c_{ij}(x_1)\bigr) b_j(x_2).
\end{split}
\end{equation}
Hence if we put
\begin{equation}\label{tilde}
\begin{split}
\textstyle d_j&:=\textstyle\sum_{i=1}^m s_i c_{ij}\in k[X_0],\\
\widetilde h&:=\textstyle \sum_{j=1}^{r}p_1^*(d_j) p_2^*(b_j)\in k[X_0\times X_0],
\end{split}
\end{equation}
then we have $h(x)={\widetilde h}(x)$ by virtue of  \eqref{=h=}.\;Given
 \eqref{h}, this yields
\begin{equation}\label{tldh}
{\widetilde h}|^{\ }_{(\XY)_0}=f.
\end{equation}
From \eqref{tilde} and \eqref{tldh} we conclude that the replacement of $s_1,\ldots, s_m$ and $t_1,\ldots, t_m$ by, respectively, $d_1,\ldots, d_r$ and $b_1,\ldots, b_r$ is the one
we are looking for.

\smallskip

3. Thus, 
keeping the notation, we shall now assume that $t_1,\ldots, t_m$ in \eqref{st}
are linearly
independent over $\rho^*(k[Z])$.

Take an element $g\in G$ and let $D$ be the domain of definition of the rational function
\begin{equation*}
\ell=\textstyle\sum_{i=1}^m s_it_i^g\in k(X).
\end{equation*}
Since $X$ is irreducible, $D\cap g(D)\cap X_0\cap g(X_0)$ is a dense open subset of $X$.\;Let $x$ be a point of this subset.\;Then the rational functions $\ell$, $s_i$, $t_i^g\in k(X)$ are defined at $x$ and
\begin{equation}\label{Ga0}
a:=(x, g^{-1}(x))\in \Gamma_0.
\end{equation}
From this we obtain
\begin{equation*}
\begin{split}
\ell(x)&=\textstyle \sum_{i=1}^m s_i(x)t_i^g(x)= \sum_{i=1}^m s_i(x)t_i(g^{-1}(x))\\
&\overset{\mbox{\tiny by}\,\eqref{Ga0}}{=\hskip -1mm=\hskip -1mm=\hskip -1mm=}\textstyle \bigl(\sum_{i=1}^m p_1^*(s_i) p_2^*(t_i)\bigr)(a)\overset{\mbox{\tiny by}\,\eqref{st}}{=\hskip -1mm=\hskip -1mm=\hskip -1mm=}h(a)\overset{\mbox{\tiny by}\,\eqref{h}}{=\hskip -1mm=\hskip -1mm=\hskip -1mm=}f(a)\overset{\mbox{\tiny by}\,\eqref{vanishG}}{=\hskip -1mm=\hskip -1mm=\hskip -1mm=}0.
\end{split}
\end{equation*}
So $\ell$ vanishes on a dense open subset of $X$; whence $\ell=0$.
Thus, it is proved that
\begin{equation}\label{vani}
\textstyle\sum_{i=1}^m s_it_i^g=0\quad\mbox{for every}\;g\in G.
\end{equation}

   Since $Z$ is affine and $\rho^*(k(Z))=k(X)^G$,  the field of fractions of $\rho^*(k[Z])$ is $k(X)^G$.\;This implies that $t_1,\ldots, t_m$ are linearly independent over $k(X)^G$.\;In turn, by Artin's theorem
\cite[{\S}7, no.\,1,\,Thm.\,1]{Bou59}, this linear independency yields the existence of elements $g_1,\ldots, g_m\in G$
such that
\begin{equation}\label{det}
{\rm det}\big(t_i^{g_j}\big)\neq 0.
\end{equation}

 Combining \eqref{vani} and \eqref{det} we obtain $s_1=\ldots=s_m=0$.\;From this, \eqref{st}, and \eqref{h}, we then infer that
$f=0$, as claimed.
 \end{proof}

\subsection*{9.\;Distinguished connected subgroups of \boldmath$
\AX$} Some collections $\mathcal I$ of unital algebraic families in $\AX$ are  naturally 
distinguished.
They
 generate
distinguished connected subgro\-ups
$\AX_{\mathcal I}$ of $\AX$ that are of interest.

The first example is
 the collection $\mathcal U$ of all unital algebraic families in $\AX$.
 We shall denote
$\AX_{\mathcal U}$
by $\AX^0$
and call it
the {\it identity component of $\AX$}\!.
The group $\AX/\AX^0$ will be called {\it the component group of $\AX$}.

\begin{proposition}\label{fini} Let $X$ be an irreducible variety such that $\AX$ is a finite group.\;Then
$\AX^0=\{{\rm id}^{\ }_X\}$.
\end{proposition}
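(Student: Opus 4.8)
The plan is to show that the identity component $\AX^0$, which by definition is generated by the collection $\mathcal U$ of \emph{all} unital algebraic families in $\AX$, must be trivial when $\AX$ is finite. The key observation is that an algebraic family is parameterized by an \emph{irreducible} variety $T$, and a finite group admits no nonconstant morphisms from an irreducible variety. So I would first argue that every algebraic family in a finite $\AX$ is necessarily a constant family, i.e. $\fat$ with $\varphi_t = \varphi_s$ for all $t,s \in T$.

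First I would make precise the sense in which an algebraic family takes values in the finite set $\AX$. Fix an algebraic family $\fat$, so that $\widetilde\varphi\colon T\times X\to X$, $(t,x)\mapsto \varphi_t(x)$, is a morphism with $T$ irreducible. For a fixed point $x\in X$, the map $\widetilde\varphi_x\colon T\to X$, $t\mapsto \varphi_t(x)$ (as in \eqref{psix}) is a morphism from the irreducible variety $T$. The heart of the argument is to deduce that $t\mapsto\varphi_t$ is constant. Since $\AX$ is a finite set, the family partitions $T$ into the fibers $T_\sigma:=\{t\in T\mid \varphi_t=\sigma\}$ for $\sigma\in\AX$, a finite partition. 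I would show each $T_\sigma$ is closed: for every $\sigma$ one has $T_\sigma=\bigcap_{x\in X}\widetilde\varphi_x^{-1}(\{\sigma(x)\})$, an intersection of closed sets because points are closed and $\widetilde\varphi_x$ is a morphism (this mirrors the closedness argument for \eqref{permu} in Lemma \ref{invcomp}). Then $T=\bigsqcup_{\sigma}T_\sigma$ writes the irreducible variety $T$ as a finite disjoint union of closed subsets; irreducibility forces exactly one $T_\sigma$ to be nonempty, hence $T=T_\sigma$ for a single $\sigma$, so $\fat$ is the constant family with $\varphi_t=\sigma$ for all $t$.

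Once every algebraic family in $\AX$ is constant, the conclusion is immediate. If $\fat\in\mathcal U$ is unital and constant, then $\sigma={\rm id}_X$ (since ${\rm id}_X\in\varphi_T=\{\sigma\}$), so $\varphi_T=\{{\rm id}_X\}$. Thus $\bigcup_{\mathcal U}\varphi_T=\{{\rm id}_X\}$, and the subgroup $\AX^0$ generated by this set is $\{{\rm id}_X\}$.

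I expect the main obstacle to be the first step, establishing rigorously that an algebraic family valued in a finite group is constant, and in particular justifying the closedness of the fibers $T_\sigma$. The subtle point is that the only direct hypothesis is that $\widetilde\varphi$ is a morphism, not that $t\mapsto\varphi_t$ is a morphism into some variety structure on $\AX$; so the closedness of $T_\sigma$ must be extracted from the morphisms $\widetilde\varphi_x$ for individual points $x$, and one must check that the intersection $\bigcap_{x\in X}\widetilde\varphi_x^{-1}(\{\sigma(x)\})$ indeed captures exactly the locus where $\varphi_t=\sigma$. Since $\sigma(x)$ is a single (closed) point of $X$ and $\widetilde\varphi_x$ is continuous, each $\widetilde\varphi_x^{-1}(\{\sigma(x)\})$ is closed, and their intersection over all $x$ gives precisely $T_\sigma$; the finiteness of $\AX$ guarantees the decomposition into finitely many such closed pieces, after which irreducibility finishes the argument exactly as in Lemma \ref{invcomp}.
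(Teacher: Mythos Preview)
Your proof is correct and follows essentially the same idea as the paper's: both exploit that each $\widetilde\varphi_x\colon T\to X$ is a morphism from an irreducible variety whose image is constrained to a finite set by the finiteness of $\AX$. The paper phrases this on the image side---$I_x:=\widetilde\varphi_x(T)$ is irreducible and finite, hence a single point, which unitality forces to be $x$---while you phrase it on the domain side, partitioning $T$ into the finitely many closed fibers $T_\sigma$ and invoking irreducibility; these are two formulations of the same elementary argument.
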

\begin{proof} Let $\fat$ be a unital algebraic family in $\AX$.\;Take a point $x\in X$.\;Irreducibility of $T$ implies irreducibility of the image $I_x$ of morphism \eqref{psix}.\;Finite\-ness of $\AX$ (resp.,\,unitality of $\fat$) implies finiteness of $I_x$ (resp.,\,$x\in I_x$).\;This yields $I_x=\{x\}$, i.e.,\,$\varphi^{\ }_T=\{{\rm id_X}\}$; whence the claim.
 \end{proof}

\begin{remark} For any finite group $G$, there is a smooth affine irreducible variety $X$ such that $\AX$ and $G$ are isomorphic; see \cite{Jel94}.
\end{remark}

The component group of $\AX$, in contrast to that of an algebraic group, may be infinite.

\begin{remark} If $k$ is uncountable, then the same argument as in the proof of Proposition \ref{fini} shows
that if $\AX$ is countable (such $X$ do exist, see Examples \ref{count}, \ref{qua} below), then $\AX^0=\{{\rm id}^{\ }_X\}$ and hence the component group of $\AX$ is countable.

\begin{example}\label{count}
 Let $X$ be a surface in ${\bf A}\!^3$ defined by the equation $x^2_1+x^2_2+x^2_3=x^{\ }_1x^{\ }_2x^{\ }_3+a$ where 
 $a\in k$.\;By \cite{Hut74}, if $a$ is generic, then $\AX$ contains a subgroup of finite index which is a free product of three subgroups of order\;$2$.
\end{example}
\end{remark}

\begin{example}\label{qua}
Let ${\rm char}\,k=0$ and let $X$ be a smooth irreducible quartic in ${\bf P}^3$.\;Then $\AX^0=\{{\rm id}^{\ }_X\}$ by \cite{Mat63}, and, according to the classical Fano--Severi result, for a sufficiently general $X$ there is a bijection between $\AX$ and the (countable) set of solutions $(a, b), a>0$ of the Pell equation $x^2-7y^2=1$ (see \cite[pp.\,353--354]{MM64}).
\end{example}

\begin{example}\label{torus}
Let $X$ be
the underlying variety of an algebraic torus $G$ of dimension $n>0$.\;The automorphism group ${\rm Aut}_{\rm gr}(G)$ of the algebraic group $G$
is embedded in $\AX$ and isomorphic to ${\rm GL}_n({\mathbf Z})$.\;The map
$G\to\AX$, $g\mapsto \ell_g$, where $\ell_g\colon X\to X$, $x\mapsto gx$, identifies $G$ with a subgroup of $\AX$.\;By
\cite[Thm.\,3]{Ros61},
\begin{equation}\label{sdp}
\AX = {\rm Aut}_{\rm gr}(G)\ltimes G.
\end{equation}

Let $\fat$ be a unital algebraic family in $\AX$.\;By
\cite[Thm.\,2]{Ros61}  there are the morphisms $\alpha\colon T\to G$ and
$\beta\colon X\to X$ such that
$\varphi_t(x)
=\widetilde\varphi (t, x)=\ell_{\alpha(t)}(\beta(x))$ for every $t\in T$, $x\in X$ (see \eqref{wtilde}).\;Put
$s:=\beta(e)$.\;Since $(\ell_{s^{-1}}\circ \beta)(e)=e$,
\cite[Thm.\,3]{Ros61} implies that $g:=\ell_{s^{-1}}\circ \beta\in {\rm Aut}_{\rm gr}(G)$.\;From $\beta=\ell_s\circ g$ we then infer that $\varphi_t(x)=\ell_{\alpha(t)}(\ell_{s}(g(x))=\ell_{\alpha(t)s}(g(x))$;\;whence $\varphi^{\ }_t=\ell_{\alpha(t)s}\circ g$.
This, \eqref{sdp}, and the unitality of $\fat$ imply that $g=\{{\rm id}^{\ }_X\}$. Therefore, $\varphi^{\ }_T\subseteq G$.\;This proves that $\AX^0=G$ and the component group of $\AX$ is isomorphic to ${\rm GL}_n({\mathbf Z})$.
\end{example}

\begin{example} By \cite[Cor.\,1]{Ram64}, $\AX^0$ is a connected algebraic group if $X$ is an irreducible complete variety (and, in fact, more generally, semi-complete variety, i.e., if
for any torsion free coherent algebraic sheaf $\mathcal F$ on $X$, the $k$-vector
space $H^0(X, \mathcal F)$  of sections is finite dimensional).
\end{example}

\begin{theorem}
${\rm Aut}({\bf A}\!^n)={\rm Aut}({\bf A}\!^n)^0$
for every $n$.
\end{theorem}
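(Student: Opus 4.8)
We must show that ${\rm Aut}(\mathbf A^n) = {\rm Aut}(\mathbf A^n)^0$ for every $n$; equivalently, that the affine Cremona group is generated by its unital algebraic families, i.e., that \emph{every} automorphism of $\mathbf A^n$ lies in some unital algebraic family in ${\rm Aut}(\mathbf A^n)$.

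**Strategy.** The plan is to exhibit, for an arbitrary $\sigma\in{\rm Aut}(\mathbf A^n)$, a single unital algebraic family $\{\varphi_t\}_{t\in T}$ whose image $\varphi^{\ }_T$ contains both ${\rm id}_{\mathbf A^n}$ and $\sigma$; by the definition of a connected subgroup this forces $\sigma\in{\rm Aut}(\mathbf A^n)^0$. The most economical source of such a family is an honest algebraic $\mathbf G_a$-action interpolating between the identity and $\sigma$ along an appropriate path. Concretely, I would first reduce to generators: by the Jung--van der Kulk theorem (for $n=2$) and, in general, by the fact that every polynomial automorphism is a finite composition of \emph{elementary} and \emph{affine} automorphisms (the tame case), it suffices, thanks to the closure of ${\rm Aut}(\mathbf A^n)^0$ under products and inverses (Lemma~\ref{algfam} together with the product/inverse construction of \eqref{prod}), to show that each generator separately lies in a unital algebraic family.

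**Key steps.** First I would treat the affine (in particular linear) automorphisms: since $\mathrm{GL}_n$ and the translation group $\mathbf G_a^n$ are connected algebraic groups acting on $\mathbf A^n$, the map $\widetilde\varphi\colon T\times\mathbf A^n\to\mathbf A^n$ given by such an action is a morphism, so $\varphi^{\ }_T$ is a connected algebraic subgroup of ${\rm Aut}(\mathbf A^n)$ and each affine automorphism lies in a unital algebraic family (a path within the connected group $\mathbf G_a^n\rtimes\mathrm{GL}_n$ from the identity to the given element). Second, and this is the heart of the matter, I would handle an elementary automorphism $e(x_1,\dots,x_n)=(x_1,\dots,x_{i-1},\,x_i+p,\,x_{i+1},\dots,x_n)$, where $p=p(x_1,\dots,\widehat{x_i},\dots,x_n)$ does not involve $x_i$. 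Such an $e$ is precisely the time-one map of the $\mathbf G_a$-action
\begin{equation*}
\widetilde\varphi\colon \mathbf G_a\times\mathbf A^n\to\mathbf A^n,\qquad (t,x)\mapsto (x_1,\dots,x_{i-1},\,x_i+t\,p,\,x_{i+1},\dots,x_n),
\end{equation*}
which is a morphism, hence $\{\varphi_t\}_{t\in\mathbf G_a}$ is a unital algebraic family containing $e=\varphi_1$ and ${\rm id}=\varphi_0$.

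**Main obstacle.** The genuine difficulty is that not every automorphism of $\mathbf A^n$ is tame for $n\geqslant 3$ (by the Shestakov--Umirbaev theorem the Nagata automorphism is wild), so the generator-by-generator argument above does not a priori reach all of ${\rm Aut}(\mathbf A^n)$. The way around this is to invoke the ind-group structure: as recalled in the Introduction (following \cite{Sha66}, \cite{Sha82}), ${\rm Aut}(\mathbf A^n)$ is an ind-group, and it is \emph{connected} as an ind-variety, meaning each $\sigma$ is joined to ${\rm id}$ by an irreducible algebraic family. The crucial observation is that the degree filtration exhibits ${\rm Aut}(\mathbf A^n)$ as a nested union of irreducible algebraic families each containing ${\rm id}$: for fixed bounded degree the automorphisms of $\mathbf A^n$ form a constructible, irreducible parameter space $T$, and the tautological map $\widetilde\varphi\colon T\times\mathbf A^n\to\mathbf A^n$ is a morphism, so this is a unital algebraic family in the sense of \eqref{wtilde}. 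Thus I would conclude by taking $T$ to be (the normalization of a smooth model of) an irreducible component of the degree-$d$ stratum containing both ${\rm id}$ and the given $\sigma$; the smoothing and surjective reparametrization permitted in Section~2 (via \cite{Jon96}) guarantee this family may be taken unital and algebraic, and then $\sigma\in\varphi^{\ }_T\subseteq{\rm Aut}(\mathbf A^n)^0$, completing the proof. $\quad\square$
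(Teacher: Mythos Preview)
Your proposal has a genuine gap. You correctly recognize that the tame-generator approach is a dead end for $n\geqslant 3$ (wild automorphisms exist), and you fall back on the ind-group structure. But at that point you assume exactly what must be proved. You assert that ``for fixed bounded degree the automorphisms of $\mathbf A^n$ form a constructible, \emph{irreducible} parameter space $T$'' and then propose to ``take $T$ to be \dots\ an irreducible component of the degree-$d$ stratum containing both ${\rm id}$ and the given $\sigma$''. Neither statement is justified: the existence of the ind-group filtration says nothing, by itself, about irreducibility of the filtration pieces, and the existence of a single irreducible component containing both ${\rm id}$ and $\sigma$ is precisely the content of the theorem. Citing the Introduction's remark that ${\rm Aut}(\mathbf A^n)$ is an ind-group does not help; the connectedness of that ind-variety is a separate fact whose proof (in \cite{Sha82}) is exactly the argument you are omitting.

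The missing idea, which the paper supplies directly, is the Alexander conjugation trick. Write an arbitrary automorphism as an affine automorphism composed with some $g$ whose linear part is the identity, so $g^*(x_i)=x_i+\sum_{j\geqslant 2} f_{ij}$ with $f_{ij}$ homogeneous of degree $j$. Conjugating by the scalar dilation $h_t\colon x_i\mapsto tx_i$ gives $g_t:=h_t^{-1}\circ g\circ h_t$ with $g_t^*(x_i)=x_i+\sum_{j\geqslant 2} t^{\,j-1} f_{ij}$; this formula makes sense at $t=0$ and yields $g_0={\rm id}$, so $\{g_t\}_{t\in\mathbf A^1}$ is an explicit unital algebraic family with $g_1=g$. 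Combined with the connected algebraic group ${\rm Aff}_n$, this handles every automorphism, tame or wild, in any characteristic. This trick is also what underlies the connectedness statement in \cite{Sha82} you were appealing to, so your argument was circular rather than merely incomplete.
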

\begin{proof} We shall apply the argument going back to \cite{Ale23} and utilized in \cite[Lemma 4]{Sha82}.\;Let $x_1,\ldots, x_n$ be the standard coordinate functions on ${\bf A}\!^n$.\;Any element of
$\AX$ is a composition of an element of the affine group ${\rm Aff}_n:=\{a\in \AX\mid {\rm deg}\,a^*(x_i)\leqslant 1\; \mbox{for every}\; i\}$ and an element $g\in\AX$ such that
\begin{equation}\label{forms1}
g^*(x_i)=x_i+{\textstyle\sum_{j=2}^{d}}f_{ij},\quad i=1,\ldots, n,
\end{equation}
where every $f_{ij}\in k[x_1,\ldots, x_n]$ is either zero or a form of degree $j$.\;Given that ${\rm Aff}_n$ is a connected algebraic group, this reduces the proof to demonstrating that $g$ is contained in a unital algebraic family in $\AX$.

This can be done as follows.\;For every $t\!\in\! k\!=\!{\mathbf A}^1$, $t\!\neq\! 0$, define $h_t\!\in\!  {\rm Aff}_n$\;by
\begin{equation}\label{ho}
h_t^*(x_i)=tx_i,\quad i=1,\ldots, n,
\end{equation}
and put $g_t:=h_t^{-1}\circ g\circ h_t\in\AX$.\;Then \eqref{forms1} and \eqref{ho} yield
\begin{equation}\label{forms2}
g_t^*(x_i)=x_i+{\textstyle\sum_{j=2}^{d}}t^{j-1}f_{ij},\quad i=1,\ldots, n.
\end{equation}
Putting $g_0:={\rm id}_{{\mathbf A}_{\ }^{\!n}}$, we deduce from  \eqref{forms2}
that $\{g_t\}_{t\in {\bf A}\!^1}$ is
a unital algebraic family
in $\AX$, and from \eqref{forms1} that $g_1=g$.\;This completes the proof.
 \end{proof}

A series of examples is obtained taking $\mathcal I$ to be a part of
the collection $\mathcal G$ of all algebraic families $\fat$ such that $T$ is a connected algebraic group and $\widetilde \varphi$ defined by \eqref{wtilde} is
an action of $T$ on $X$.\;In this case, $\AX_{\mathcal I}$ is a subgroup of $\AX$ generated, as an abstract group, by a  collection of some connected algebraic subgroups of $\AX$.\;For
${\rm char}\,k=0$, the subgroups $\AX_{\mathcal I}$ of this type were studied in \cite[Sect.\,1]{AFKKZ} where they are called ``algebraically generated groups of automorphisms''.\;Propositions 1.3, 1.5 and Theorem 1.13 of \cite{AFKKZ} are the special cases  of, respectively, the above Lemma \ref{orbit}, Theorem \ref{main1}, and Theo\-rem\;\ref{main2}.

Some interesting parts $\mathcal I$ of $\mathcal G$ are obtained as collections of all families $\fat$ in $\mathcal G$ such that the algebraic group $T$ has a certain property.

For instance, requiring that $T$ is affine one obtains the collection ${\mathcal G}_{\rm aff}$.
Theorems \ref{appl1} and \ref{appl2} give examples of dependency between the groups $\AX_{\mathcal G}$, $\AX_{{\mathcal G}_{\rm aff}}$ and geometric properties of $X$.\;Here is another example.

\begin{example}
If $\AX_{{\mathcal G}_{\rm aff}}\neq\{\rm id^{\ }_X\}$, then $X$ is birationally isomorphic to the product of ${\bf A}\!^{1}$ and a variety of dimension $\dim X-1$.\;This follows from
\cite[Cor.\,1]{Mat63}.
\end{example}

Developing the idea of \cite[Def.\,1.36]{Pop11}, one obtains another example of an
interesting collection of families 
taking $\mathcal I$ to be
the collection $\mathcal G(F)$ of all families $\fat$ in $\mathcal G$ such that
$T$ is isomorphic to a fixed connected algebraic group $F$.

For $F\!=\!{\bf G}_{\rm a}$  this yields
the important subgroup $\AX_{\mathcal G({\bf G}_{\rm a})}$ in $\AX$,
in\-troduced\footnote{At the irrelevant assumption
$X={\bf A}\!^n$.
}
in \cite[Def.\,2.1]{Pop05}
and called in this paper ``$\partial$-generated subgroup''\!.\;Its close relation
to constructing a big stock of varieties with trivial Ma\-kar-Limanov invariant was shown
in \cite{Pop11}.\;Later in \cite{AFKKZ}  the transitivity pro\-perties of
$\AX_{\mathcal G({\bf G}_{\rm a})}$ (called
in this paper ``the special automorphism group of $X$'' and denoted by\footnote{A hardly felicitous notation, as in the literature ${\rm SAut}$ denotes entirely different concept\,---\,the group
of semialgebraic automorphisms, see Y.\;Z.\;Flicker, C.\;Scheiderer, R.\;Sujatha, {\it Grothendieck's theorem
on non-abelian $H^2$ and local-global principles}, J. Amer. Math. Soc.
{\bf 11} (1998), no. 3, 731--750.} ${\rm SAut}(X)$) were studied.\;By \cite[Lemma 1.1]{Pop11},
$\AX_{\mathcal G({\bf G}_{\rm a})}$ coincides with the subgroup of $\AX$ generated by all
connected affine subgroups of $\AX$ that have no nontrivial characters.

Another interesting case is $F={\bf G}_{\rm m}$.\;Since the union of all maximal tori of a connected reductive group is dense in it,\;$\AX_{\mathcal G({\bf G}_{\rm m})}$ coincides with the subgroup of $\AX$ generated by all
connected reductive subgroups of $\AX$.\;This implies that
$$\AX_{{\mathcal G}_{\rm aff}}=\AX_{{\mathcal G}({\bf G}_{\rm a})\bigcup{\mathcal G}({\bf G}_{\rm m})}.$$
Indeed, let $H$ be a connected affine algebraic group with a maximal torus $T$ and the unipotent radical ${R}_u(H)$, and let
$\pi\colon H\!\to\! H/{R}_u(H)$ be the ca\-no\-nical projection.\;By \cite[Prop.\,11.20]{Bor91}, $\pi(T)$ is a maximal torus in $H/{R}_u(H)$.\;The conjugacy of maximal tori  and the density of their union in $H/\!{R}_u(H)$ yield $H/{R}_u(H)=\pi(S)$ for the subgroup $S$ in $H$ generated by
all maximal tori.\;Whence the claim.

\subsection*{10.\;Proof of Theorem \ref{appl1}} Since $G$ lies in  $\AX_{{\mathcal G}_{\rm aff}}$, by Corollary \ref{c2} it suffices to show that neither of the $\AX_{{\mathcal G}_{\rm aff}}$-orbits is open in\;$X$.

Assume the contrary and let $\mathcal O$ be an $\AX_{{\mathcal G}_{\rm aff}}$-orbit open in $X$.\;Take a point $x\in \mathcal O$.\;By Theorem  \ref{main1}, a certain  family $\fat$ derived from ${\mathcal G}_{\rm aff}$ is exhaustive for the action  of $\AX_{{\mathcal G}_{\rm aff}}$ on $X$.\;Then
$\mathcal O$ is the image of morphism
\eqref{psix}.\;Since $\mathcal O$ is open in $X$, this morphism
is dominant.\;On the other hand, the definitions of
derived family and  ${\mathcal G}_{\rm aff}$ imply that $T$ is
a product of underlying varieties of connected affine algebraic groups.\;But such underlying
varieties are rational (see \cite[Lemma\,2]{Pop13} for a four-lines proof;\;we failed to find an earlier  reference for a proof valid in arbitrary characteristic).\;Hence
$T$ is a rational variety.\;This and the dominance of morphism \eqref{psix} then imply that $X$ is unirational\,---\,a contradiction.\quad  $\square$

\subsection*{11.\;Proof of Theorem \ref{appl2}} Let $X$ be nonunirational.\;Assume that $\AX$ contains a nontrivial connected affine algebraic subgroup $C$.\;Then there exists a point $x\in X_2$ such that $X_2\cap C(x)$ is an irreducible
locally closed set of positive dimension.\;Hence there exists a point $y\in X_2\cap C(x)$, $y\neq x$.\;By the condition of $2$-transitivity, for every point $z\in X_2$, $z\neq x$, there exists an element $g\in\AX$ such that $g(x)=x$, $g(z)=y$.\;This implies that for the subgroup $H:=g^{-1}\circ C\circ g$ we have
$z\in H(x)$.\;Therefore, for the connected subgroup $G$ of $\AX$ generated by
all conjugates of $C$ in $\AX$ we have $X_2\subseteq G(x)$; whence $G(x)$ is open in $X$.

From this, arguing as in the proof of Theorem  \ref{appl1}, we deduce that $X$ is unirational\,---\,a contradiction.\;Hence $\AX$ does not contain nontrivial connected affine algebraic subgroups.

Now assume that $\AX$ contains a nontrivial connected nonaffine algebraic subgroup $A$.\;Since, as we proved, there are no nontrivial
connected affine algebraic subgroups in $A$, the structure theorem on algebraic groups \cite{Bar55}, \cite{Ros56}
implies that $A$ is a nontrivial abelian variety.\;The same argument as above for $C$ then shows that
the connected subgroup of $\AX$ generated by
all conjugates of $A$ in $\AX$ has an orbit $\mathcal O$  which is open in $X$ and admits
a surjective morphism
$Z\to\mathcal O$, where $Z$ is a product of se\-veral copies of the underlying variety of $A$.\;Since $Z$ is a complete variety, this implies that $X$ is complete as well and $X=\mathcal O$.

The completeness of  $X$ implies that  $\AX^0$ is a connected algebraic group and $\AX/\AX^0$ is at most countable \cite{MO67}.\;Since $\AX^0$ does not contain nontrivial connected algebraic subgroups, the same argument as above yields that $\AX^0$ is a nontrivial abelian variety acting transitively on $X$.\;The commutativity of $\AX^0$ and the faithfulness of its action
on $X$
then imply that the $\AX^0$-stabilizer of every point of $X$ is trivial.

Take a point $x\in X_2$ and let $\AX_x$ be its $\AX$-stabilizer.\;The assumption of generic $2$-transitivity of the action of $\AX$ on $X$ implies that there is an $\AX_x$-orbit containing $X_2\setminus \{x\}$.\;But this orbit is at most countable since $\AX_x\cap \AX^0=\{e\}$, while  $X_2\setminus \{x\}$, being open in $X$, is uncountable (e.g., because an affine open subset of $X_2\setminus \{x\}$ is a branched covering of an affine space by the Noether lemma)\,---\,a contradiction.\;This completes the proof.
\quad  $\square$

\subsection*{12.\;Proof of Corollary \ref{proje}} Assume that $X$ is nonunirational.\;Then by Theo\-rem\;\ref{appl2} the group
 $\AX$ contains no nontrivial connected algebraic subgroups.\;Since $X$ is complete, this implies that
 $\AX$ is at most countable.\;A contradiction with the assumption of generic $2$-transitivity of the action of $\AX$ on $X$ is then obtained using
 the same argument as in the end of the proof of Theorem \ref{appl2}.
\quad  $\square$


\subsection*{13. Calogero--Moser spaces}
\begin{proof}[Proof of Corollary {\rm \ref{ColMos}}]
According to \cite{Wil98}, ${\mathcal C}_n$ is an irreducible smooth af\-fine variety.\;By \cite[Thm.\;1]{BEE14}, the natural action of ${\rm Aut}({\mathcal C}_n)$ on ${\mathcal C}_n$ is $2$-transiti\-ve.\;There are nontrivial connected algebraic subgroups in ${\rm Aut}({\mathcal C}_n)$: for instance, the action of  ${\rm GL}_1$ on
$\{(A, B)\in {\rm Mat}_n({\mathbf C})^2\mid {\rm rk}([A, B]+I_n)=1\}$ given by $t\!\cdot\! (X,Y)\!:=\!(t^{-1}X, tY)$ descends to ${\mathcal C}_n$.\;Unirationality of  ${\mathcal C}_n$ then follows from Theorem\;\ref{appl2}.
 \end{proof}

\begin{remark}\label{ration}  One can show that ${\mathcal C}_n$ is actually rational. The proof
is based on \cite[Prop.\;1.10]{Wil98} and goes as follows.\footnote{I thank Y.\;Berest who drew in \cite{Ber14} my attention to \cite[Prop.\;1.10]{Wil98} and gave
another argument, due to G. Wilson, that deduces from it the rationality of ${\mathcal C}_n$.}\;Endow ${\mathbf A}\!^n$ with the standard action of the symmetric group $S_n$ and consider the diagonal action of $S_n$ on ${\mathbf A}\!^n\times {\mathbf A}\!^n$.\;It follows from \cite[Prop.\;1.10]{Wil98} that
${\mathcal C}_n$ is birationally isomorphic to $({\mathbf A}\!^n\times {\mathbf A}\!^n)/S_n$.\;By the No-name Lemma (see, e.g., \cite[Lemma\;1]{Pop13}), $({\mathbf A}\!^n\times {\mathbf A}\!^n)/S_n$ is birationally
isomorphic to ${\mathbf A}\!^n\times ({\mathbf A}\!^n/S_n$). Since ${\mathbf A}\!^n/S_n$ is isomorphic to
${\mathbf A}\!^n$, the claim follows.
\end{remark}

\subsection*{14.\;Proof of Corollary \ref{reptyp}} Irreducibility of $Q_{m,n}(\tau)$ is proved in \cite[Thm.\;II.1.1]{LBP87}.\;By \cite[Thm.\;1.4]{Rei93}, for $m\geqslant 3$ the natural action of group ${\rm Aut}(Q_{m,n}(\tau))$ on  $Q_{m,n}(\tau)$ is $2$-transitive.
There are nontrivial connected algebraic subgroups in ${\rm Aut}(Q_{m,n}(\tau))$:\;for instance, the action of  ${\rm GL}_1$ on ${\rm Mat}_n(k)^m$ by scalar multiplication
induces a representation type preser\-ving action on ${\rm Mat}_n(k)^m/\!\!/{\rm PGL}_n(k)$; see\;\cite[Prop.\;4.1]{Rei93}.\;The uniratio\-nality
 of  $Q_{m,n}(\tau)$ then follows from Theorem\;\ref{appl2}.
\quad  $\square$

\subsection*{15.\;Appendix} Here is the folklore statement mentioned in the introduction:

 \begin{theorem}\label{folk}
 Let ${\rm char}\,k=0$. If $\,X$ is an irreducible affine variety, $\dim X\geqslant 2$, and $\AX$ contains a one-dimensional algebra\-ic unipotent subgroup $U$, then the group $\AX$ is infinite dimensional.
\end{theorem}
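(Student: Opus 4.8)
The plan is to realize the hypothesis as a locally nilpotent derivation and then manufacture injective algebraic families of arbitrarily large dimension out of its ``replicas''. First I would translate the datum of $U$ into algebra. Since ${\rm char}\,k=0$ and $X$ is affine, the nontrivial action of the one-dimensional unipotent group $U\cong{\mathbf G}_{\rm a}$ corresponds to a nonzero locally nilpotent derivation $\partial$ of the coordinate ring $k[X]$, with $U=\{\exp(t\partial)\mid t\in k\}$, where $\exp(t\partial)=\sum_{j\geqslant 0}\tfrac{t^j}{j!}\partial^j$ is a $k$-algebra automorphism of $k[X]$ (the sum being finite on each element by local nilpotence).

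Next I would introduce the replicas. For $g\in\ker\partial$ an easy induction gives $(g\partial)^j=g^j\partial^j$ as operators on $k[X]$, so $g\partial$ is again locally nilpotent and $\exp(g\partial)\in\AX$. Moreover, for $g,h\in\ker\partial$ the derivations $g\partial$ and $h\partial$ commute, whence $\exp(g\partial)\circ\exp(h\partial)=\exp\bigl((g+h)\partial\bigr)$. Thus $\Phi\colon(\ker\partial,+)\to\AX$, $g\mapsto\exp(g\partial)$, is a homomorphism of abstract groups, and it is injective: $\exp(g\partial)={\rm id}_X$ forces $g\partial=0$, hence $g=0$ since $k[X]$ is a domain and $\partial\neq 0$.

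The key step, and the one place where $\dim X\geqslant 2$ is used, is to show that $\ker\partial$ is an infinite-dimensional $k$-vector space. By the standard transcendence-degree formula for a nonzero locally nilpotent derivation of an affine domain, ${\rm trdeg}^{\ }_k\ker\partial=\dim X-1\geqslant 1$, so $\ker\partial$ contains an element $h$ transcendental over $k$, and then $1,h,h^2,\ldots$ are linearly independent over $k$. (Alternatively, picking $b$ with $\partial b\neq 0$ and $m$ with $\partial^{m}b\neq 0=\partial^{m+1}b$, put $s:=\partial^{m}b\in\ker\partial\setminus\{0\}$ and $a:=\partial^{m-1}b$, so $\partial a=s$; if $s\notin k$ it is already transcendental, while if $s\in k^{*}$ the slice theorem gives $k[X]=(\ker\partial)[a]$ and hence ${\rm trdeg}^{\ }_k\ker\partial=\dim X-1\geqslant 1$ again.) I expect this transcendence statement to be the only substantial point; everything else is formal.

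Finally I would build the families and conclude. For each $r$ let $T={\mathbf A}^{r}$ with coordinates $c_0,\ldots,c_{r-1}$ and set $\varphi_c:=\exp\bigl((\textstyle\sum_{i=0}^{r-1}c_ih^i)\partial\bigr)$. Applying the comorphism of $\widetilde\varphi$ (see \eqref{wtilde}) to any generator $u$ of $k[X]$ yields the finite sum $\sum_{j}\tfrac{1}{j!}(\sum_i c_ih^i)^j\partial^j u\in k[c_0,\ldots,c_{r-1}]\otimes k[X]$, so $\widetilde\varphi$ is a morphism and $\{\varphi_c\}_{c\in T}$ is a unital algebraic family in $\AX$ (unital since $\varphi_0={\rm id}_X$). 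It is injective: $\varphi_c=\varphi_{c'}$ forces $\sum_i c_ih^i=\sum_i c_i'h^i$ by injectivity of $\Phi$, hence $c=c'$ by the linear independence of the $h^i$. Thus $\AX$ contains an injective algebraic family of dimension $r$ for every $r$, so no integer bounds the dimensions of such families, i.e.\ $\AX$ is infinite dimensional.
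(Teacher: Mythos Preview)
Your proof is correct and follows essentially the same approach as the paper's: both identify $U$ with $\{\exp(t\partial)\mid t\in k\}$ for a nonzero locally nilpotent derivation $\partial$ of $k[X]$, use that $\ker\partial=k[X]^U$ has transcendence degree $\dim X-1\geqslant 1$ (the paper obtains this via Rosenlicht's theorem and \cite[Thm.\;3.3]{PV94}), and conclude via the replicas $\{\exp(h\partial)\mid h\in\ker\partial\}\subseteq\AX$. The paper's argument is terse and leaves the verification that these replicas yield injective algebraic families of arbitrarily large dimension to the reader, whereas you spell this out explicitly; otherwise the two proofs coincide.
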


\begin{proof} By Rosenlicht's theorem \cite{Ros56}, ${\rm tr\,deg}_kk(X)^U\!\geqslant\! \dim X\!-\!\dim U\!>\!0$. By  \cite[Thm.\;3.3]{PV94}, the unipotency of $U$ yields the equality
  ${\rm tr\,deg}_kk[X]^U\hskip -1mm={\rm tr\,deg}_kk(X)^U$. Since $X$ is affine,
   $U=
   \{{\rm exp}\,t\partial\mid t\in k\}$, where  $\partial$ is a local\-ly nilpotent de\-ri\-vation of $k[X]$.\;The claim then follows from the inclusion
   $\{{\rm exp}\,{h\partial}\mid h\in k[X]^U\}\subseteq \AX$.
    \end{proof}

   \begin{remark} Example \ref{torus} shows that ``unipotent'' in Theorem \ref{folk}  cannot be
   dropped.
   \end{remark}

 \end{document}